\theoremstyle{definition} 
\newtheorem{theorem}{Theorem}[section]
\newtheorem{corollary}[theorem]{Corollary}
\newtheorem{lemma}[theorem]{Lemma}
\newtheorem{proposition}[theorem]{Proposition}
\newtheorem{example}[theorem]{Example}
\newtheorem{remark}[theorem]{Remark}
\newtheorem{question}[theorem]{Question}
\newcommand{\s}{\mathcal{S}}
\newcommand{\sn}{\textrm{sn}}
\newcommand{\dsn}{\textrm{dsn}}
\newcommand{\scw}{\textrm{scw}}
\newcommand{\adh}{\textrm{adh}}
\DeclareMathOperator{\tw}{tw}
\DeclareMathOperator{\gon}{gon}
\begin{document}

\title{Graphs of scramble number two}
\author{Robin Eagleton and Ralph Morrison}
\date{}

\maketitle

\begin{abstract}
    The scramble number of a graph provides a lower bound for gonality and an upper bound for treewidth, making it a graph invariant of interest.  In this paper we study graphs of scramble number at most two, and give a classification of all such graphs with a finite list of forbidden topological minors.  We then prove that there exists no finite list of forbidden topological minors to characterize graphs of any fixed scramble number greater than two.

\end{abstract}

\section{Introduction}\label{section}
Chip firing games on graphs provide a combinatorial analog for divisor theory on algebraic curves \cite{bn}.  In chip firing games, a \emph{divisor} is a placement of  integer numbers of chips on the vertices of a graph.  A vertex can be then be fired to donate one chip along each incident edge, rearranging the chips into a new divisor. This brings us to the following question: how many chips do we need in a divisor to eliminate one arbitrarily placed negative chip from the graph?  We call this minimum number the \emph{gonality} of $G$.  Gonality has been extensively studied as a graph invariant and was proved to be NP-hard to compute in \cite{Gij}.  This motivates the finding of other graph invariants that bound gonaltiy and provide a simpler route to computing the gonality of a graph.  Scramble number, developed in \cite{Har}, is one such graph invariant that serves as a lower bound for gonality: $$\sn(G) \leq \gon(G).$$
One of the first theorems on scramble number was a complete characterization of the (connected) graphs of scramble number \(1\), which are precisely the trees \cite{Har}. In this paper we push further to characterize all graphs of scramble number \(2\). 

\begin{figure}[hbt]
    \centering
\includegraphics{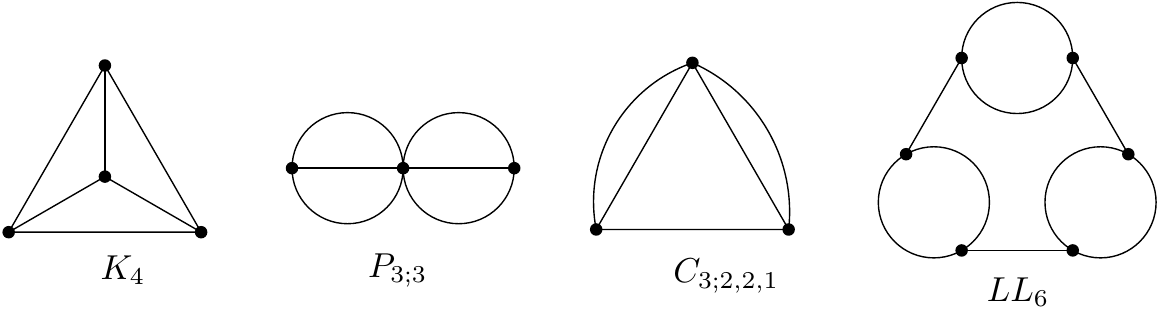}
    \caption{The four graphs from Theorem \ref{theorem:main_theorem}}
    \label{figure:four_forbidden}
\end{figure}

Our main theorem refers to the four graphs in Figure \ref{figure:four_forbidden}, which we denote by \(K_4\) (the complete graph on \(4\) vertices), \(P_{3;3}\) (a multipath), \(C_{3;2,2,1}\) (a multicycle), and \(LL_6\) (the loop-of-loops on \(6\) vertices).

\begin{theorem}\label{theorem:main_theorem}  A graph has scramble number at most \(2\) if and only if it has none of \(K_4\), \(P_{3;3}\), \(C_{3;2,2,1}\), and \(LL_6\) as a topological minor.
\end{theorem}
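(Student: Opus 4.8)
The plan is to prove the two implications of Theorem~\ref{theorem:main_theorem} separately, after establishing three enabling facts about scramble number that I would set up first: that \(\sn\) is invariant under subdivision, that it is monotone under passing to a topological minor (so \(H \preceq_t G\) implies \(\sn(H) \le \sn(G)\)), and that it is controlled by the \(2\)-connected blocks of \(G\), with \(\sn(G) = \max_B \sn(B)\) over those blocks. The first two facts let me reduce every question to reduced multigraphs (all vertices of degree \(\ge 3\)) and to treat the four obstructions only up to subdivision; the third lets me assume \(G\) is \(2\)-connected throughout the hard direction.

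For the direction ``obstruction present \(\Rightarrow \sn \ge 3\)'', I would exhibit on each of \(K_4\), \(P_{3;3}\), \(C_{3;2,2,1}\), and \(LL_6\) an explicit scramble of order \(3\). In each case the eggs can be taken to be three suitably chosen connected sets whose pairwise minimum edge cuts are all at least \(3\) and which cannot be met by two vertices; for example, the three vertices of \(C_{3;2,2,1}\) as singleton eggs have hitting number \(3\) and minimum egg-cut \(3\), while for \(K_4\) one simply uses \(\sn(K_4) \ge \tw(K_4) = 3\). Since \(\sn\) is monotone under topological minors, any \(G\) containing one of the four as a topological minor satisfies \(\sn(G) \ge 3 > 2\); this is precisely the contrapositive of the ``only if'' half of the theorem.

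The substantive direction is the converse: if \(G\) contains none of the four as a topological minor, then \(\sn(G) \le 2\). Working in the \(2\)-connected reduced setting, I would prove the contrapositive: if \(\sn(G) \ge 3\) then one of the four is a topological minor. If \(G\) has a \(K_4\) minor we are done, since for subcubic \(K_4\) this is the same as a \(K_4\) topological minor; otherwise \(G\) is series-parallel, i.e. \(\tw(G) \le 2\). Now fix a witnessing scramble \(\mathscr{S}\) of order at least \(3\), so its hitting number satisfies \(h(\mathscr{S}) \ge 3\) and its egg-cut number satisfies \(e(\mathscr{S}) \ge 3\). The bound \(e(\mathscr{S}) \ge 3\) makes every pair of eggs \(3\)-edge-connected, so by Menger any two of them are joined by three edge-disjoint paths, while \(h(\mathscr{S}) \ge 3\) prevents two vertices from meeting all eggs and thereby forces the eggs to be ``spread out'' across three essentially independent regions. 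I would use these two facts to locate three branch vertices, pairwise joined by three edge-disjoint paths inside the series-parallel structure, and then contract and reroute those paths to produce a subdivision of \(P_{3;3}\), \(C_{3;2,2,1}\), or \(LL_6\), according to whether the three regions sit in a linear or cyclic arrangement and how the connecting multiplicities distribute.

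The hard part will be this extraction, for two reasons. First, because the conclusion is a topological minor rather than merely a treewidth or connectivity statement, the routing of the three edge-disjoint paths must be carried out simultaneously for all three pairs of regions and reconciled with the series-parallel decomposition, so that the resulting branch vertices and multiplicities assemble into one of the named patterns rather than into something uncontrolled. Second, I must verify that exactly these four patterns suffice and that none is redundant; in particular, a delicate case analysis is needed to show that a cyclic chain of bananas whose multiplicity profile is too sparse to support three mutually \(3\)-edge-connected locations genuinely has \(\sn \le 2\), which is where the \(2\)-edge-cuts guaranteed by the series-parallel structure reappear as cheap egg-cuts, and that the specific configuration \(LL_6\) cannot be absorbed into \(P_{3;3}\) or \(C_{3;2,2,1}\). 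I expect the bookkeeping in this final classification step, rather than any single conceptual ingredient, to be the principal obstacle.
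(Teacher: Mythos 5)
Your proposal is a plan rather than a proof: the entire content of the hard direction is deferred to an unexecuted ``extraction'' step, and the mechanism you propose for that step is in fact wrong. You claim that from \(h(\mathcal{S})\geq 3\) and \(e(\mathcal{S})\geq 3\) in the series-parallel case you can ``locate three branch vertices, pairwise joined by three edge-disjoint paths,'' and then classify how they assemble. But \(LL_6\) itself refutes this: it is series-parallel and carries a scramble with hitting number and egg-cut number \(3\) (the eggs being the three pairs of adjacent vertices joined by single edges of the underlying \(C_6\)), yet it contains \emph{no} three vertices that are pairwise \(3\)-edge-connected. Concretely, writing \(LL_6\) as \(C_6\) with alternating doubled edges, any two of the three single edges form a \(2\)-edge-cut, and these cuts separate every pair of vertices except the three disjoint doubled-edge pairs; since those pairs are disjoint, any three vertices contain a pair separated by a \(2\)-edge-cut. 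The point you are missing is that \(e(\mathcal{S})\geq 3\) gives \(3\)-edge-connectivity between disjoint eggs \emph{as vertex sets}, not between single representative vertices, and conflating the two loses exactly the \(LL_6\) pattern --- which is precisely why \(LL_6\) is needed as a fourth obstruction and cannot be ``absorbed'' into \(P_{3;3}\) or \(C_{3;2,2,1}\). As written, your ``linear or cyclic arrangement'' case analysis could only ever output configurations built on a pairwise \(3\)-edge-connected triple, so the subcase you would declare to have scramble number at most \(2\) contains graphs of scramble number \(3\).

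For contrast, the paper never hunts for a highly connected triple of vertices. It first reduces to a \(3\)-scramble-minimal graph \(G\) (so \(\sn(G)=3\) by Corollary \ref{corollary:minimal_equality}), shows \(2\leq\lambda(G)\leq 3\) via Lemma \ref{lemlambda} and the bridge lemma (Lemma \ref{bridge}), dispatches \(\lambda(G)=3\) by induction using Mader's reduction theorem (Theorem \ref{theorem:reduction_method}, yielding Corollary \ref{corollary:3_edge_top_minors} and the minors \(K_4\), \(P_{3;3}\), \(C_{3;2,2,1}\)), and for \(\lambda(G)=2\) runs a bespoke analysis of a \(2\)-edge-cut with restricted scrambles, forcing first a doubled edge between \(v_1\) and \(v_2\) and then a bridge \(e_3\) inside \(G_2\); it is this last structure that manufactures \(LL_6\). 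Your easy direction (explicit order-\(3\) scrambles plus topological-minor monotonicity, and \(K_4\)-minor \(\Rightarrow\) \(K_4\) topological minor since \(K_4\) is subcubic) is fine and matches the paper. One secondary debt: your asserted block decomposition \(\sn(G)=\max_B \sn(B)\) over \(2\)-connected blocks is not in the paper --- only the bridge case is proved there --- and since eggs may straddle a cut vertex it requires its own argument. But the fatal issue is the first one: the step where the theorem's actual content lives is both unproven and built on a claim that the minimal obstruction \(LL_6\) already falsifies.
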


Since all graphs have a positive scramble number, and since the (connected) graphs of scramble number \(1\) are precisely the trees, a (connected) graph has scramble number exactly \(2\) if and only if it is not a tree and has none of the four graphs in Figure \ref{figure:four_forbidden} as a topological minor.

For our other primary result, let \(\mathscr{S}_m\) denote the set of all connected graphs of scramble number at most \(m\).

\begin{theorem}\label{theorem:non_result} The set \(\mathscr{S}_m\) admits a characterization by a finite list of forbidden topological minors if and only if \(m\leq 2\).
\end{theorem}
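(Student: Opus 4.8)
The plan is to prove the two directions of Theorem~\ref{theorem:non_result} separately. The forward direction ($m \leq 2$ admits a finite characterization) is essentially handed to us by Theorem~\ref{theorem:main_theorem}: for $m = 1$ the connected graphs of scramble number at most $1$ are the trees, which are characterized by forbidding a single topological minor (any cycle, e.g.\ the graph consisting of two vertices joined by two edges), and for $m = 2$ the four forbidden topological minors of Figure~\ref{figure:four_forbidden} do the job. The substance of the theorem is therefore the contrapositive of the reverse direction: for every fixed $m \geq 3$, I must show that $\mathscr{S}_m$ does \emph{not} admit any finite list of forbidden topological minors.

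The strategy for the hard direction is to exhibit, for each $m \geq 3$, an infinite antichain under the topological-minor order consisting of \emph{minimal forbidden} graphs for $\mathscr{S}_m$ — that is, an infinite family $\{H_i\}_{i \in \mathbb{N}}$ of connected graphs such that each $H_i$ has scramble number exactly $m+1$ (so $H_i \notin \mathscr{S}_m$), but every proper topological minor of $H_i$ lies in $\mathscr{S}_m$ (so $H_i$ is \emph{critical}), and no $H_i$ is a topological minor of any other $H_j$. If such a family exists, then any finite forbidden list must (in order to exclude $H_i$) contain some topological minor of $H_i$; by criticality that minor can only be $H_i$ itself, forcing the list to contain all of the infinitely many $H_i$, a contradiction. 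First I would construct the family, presumably by taking a "base gadget" of scramble number $m+1$ and attaching arbitrarily long pendant paths or chains of parallel edges whose lengths vary with $i$, engineered so that lengthening the attachments neither changes the scramble number nor creates a topological-minor relation between distinct members.

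The key computational steps are then: (i) verify $\sn(H_i) = m+1$ for all $i$, which requires both a lower bound (exhibiting a scramble of order $m+1$) and an upper bound (bounding the scramble number above, likely via the treewidth or a screewidth-type argument, since $\sn(G) \leq \tw(G)$-style inequalities or the screewidth bound control scramble number from above); (ii) verify that contracting or deleting any edge — more precisely, passing to any proper topological minor — drops the scramble number to at most $m$, establishing criticality; and (iii) verify the antichain property, showing $H_i$ is not a topological minor of $H_j$ for $i \neq j$, which is where the varying attachment lengths must be chosen so that the topological structure (the branch vertices and the pattern of path lengths between them) rigidly distinguishes the members.

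The main obstacle I expect is step (ii), proving criticality. Scramble number does not behave as predictably under edge deletion/contraction as, say, treewidth does, and showing that \emph{every} proper topological minor drops below $m+1$ means ruling out all the ways a scramble of order $m+1$ might survive a local modification. This will likely demand a careful analysis of how scrambles interact with the specific gadget, perhaps using the characterization machinery (forbidden minors and the explicit low-scramble-number structure) developed for Theorem~\ref{theorem:main_theorem} as a model, together with monotonicity or submodularity properties of scramble number under topological-minor operations. Constructing the gadget so that its scramble number is genuinely \emph{tight} and collapses under any simplification is the crux; the antichain property (iii) is typically the easier bookkeeping once the lengths are chosen to be mutually incomparable.
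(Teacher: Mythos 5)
Your overall architecture is exactly right and matches the paper: handle $m\leq 2$ via Theorem~\ref{theorem:main_theorem} (and $P_{2;2}$ for trees), and for $m\geq 3$ exhibit infinitely many \emph{critical} graphs --- graphs $H$ with $\sn(H)=m+1$ whose every proper topological minor has scramble number at most $m$ (the paper calls these $(m+1)$-scramble minimal) --- each of which must then itself appear on any forbidden list. You even anticipate the right upper-bound tool (screewidth, Theorem~\ref{theorem:sn_scw}). Note in passing that your step (iii), the antichain property, is automatic once (ii) holds: if $H_i$ were a proper topological minor of $H_j$, criticality of $H_j$ would force $\sn(H_i)\leq m$, contradicting $\sn(H_i)=m+1$; no bookkeeping with attachment lengths is needed.

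The genuine gap is your proposed construction, and it is not a repairable detail: a ``base gadget of scramble number $m+1$ with arbitrarily long pendant paths or subdivided attachments'' can \emph{never} yield critical graphs. By Lemma~\ref{bridge}, deleting a pendant tree leaves the scramble number unchanged, and by Lemma~\ref{lemma:smoothing}, smoothing the degree-$2$ vertices on any long path leaves it unchanged as well; so each of your $H_i$ would have a proper topological minor (essentially the base gadget) still of scramble number $m+1$, violating criticality --- and worse, all your $H_i$ would have the same gadget as a common topological minor, so a finite list containing that one gadget would exclude the whole family. Precisely because length-padding is invisible to the topological-minor order, the paper's families vary a parameter that smoothing cannot erase: the multicycles $C_{n;k}$ (every edge in $k\geq 2$ parallel copies, so every vertex has degree $2k\geq 4$ and no smoothing is possible) for $m+1=2k$, and the near-uniform multicycles $\tilde{C}_{n;k}$ for $m+1=2k+1$, with $n$ varying. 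Criticality is then proved in Lemmas~\ref{lemma:minimal_even} and~\ref{lemma:minimal_odd} by exhibiting, for each single-edge deletion, an explicit path-shaped tree-cut decomposition of width $2k-1$ (resp.\ $2k$), giving $\sn(H-e)\leq\scw(H-e)\leq m$; the lower bound $\sn\geq m+1$ comes from Lemma~\ref{edgeconnect} or a disjoint vertex-egg scramble. So the crux you flagged in step (ii) is resolved not by analyzing how scrambles survive local modifications, but by choosing graphs whose multiplicities make the post-deletion screewidth certificate trivial to write down; without replacing your padding construction by something of this kind, your proof cannot be completed.
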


Our paper is organized as follows. In Section \ref{section:background} we present background material and useful lemmas.  In Section \ref{section:sn2} we prove Theorem \ref{theorem:main_theorem}.  In Section \ref{section:higher_orders} we construct for each \(k\geq 3\) an infinite families of graphs, topological minor minimal among those with scramble number \(k\), to prove Theorem \ref{theorem:non_result}.  In Section \ref{section:applications} we present some applications of our results.
    
    \smallskip

\noindent \textbf{Acknowledgements.} The authors thank Professor Colin Adams for suggestions and comments on an early draft of these results.  The authors were supported by NSF Grant DMS-2011743.

\section{Background and preliminaries}\label{section:background}

A \emph{graph} $G$ is a finite set of \emph{vertices}, $V(G)$, and a finite multiset of \emph{edges}, $E(G)$, such that every edge connects exactly two vertices. We allow for multiple edges to connect the same pair of vertices, but we do not allow an edge to connect a vertex to itself.  A graph is \emph{connected} if there exist a path of edges and vertices between every pair of vertices in $G$, and is \emph{disconnected} if it is not connected.   The \emph{edge connectivity} of a graph $G$, denoted $\lambda(G)$, is the minimum number of edges we must delete from $G$ to obtain a disconnected subgraph.  The edge connectivity of a graph offers more insight into the structure of the graph through the following integral graph theory theorem.
    
    \begin{theorem}[Menger's Theorem \cite{Men}]

    A graph $G$ has $\lambda(G)\geq k$ if and only if there exist $k$ edge-disjoint paths between every pair of vertices in $G$.
    \end{theorem}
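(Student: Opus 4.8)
The plan is to prove the two implications separately, and to reduce the global statement about $\lambda(G)$ to the local statement that, for a fixed pair of vertices $s,t$, the minimum number of edges whose deletion separates $s$ from $t$ equals the maximum number of edge-disjoint $s$--$t$ paths. Granting the local statement for every pair, one simply observes that $\lambda(G)$ is the minimum over all pairs $s,t$ of the minimum $s$--$t$ edge cut, so $\lambda(G)\geq k$ holds exactly when every pair admits at least $k$ edge-disjoint paths.

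The reverse implication is the straightforward one. Suppose there are $k$ edge-disjoint paths between every pair of vertices. Fix any pair $s,t$ and any set $F$ of edges whose removal disconnects $s$ from $t$. Since the $k$ paths are pairwise edge-disjoint, each edge of $F$ lies on at most one of them; as $F$ must meet all $k$ paths (otherwise a surviving path would keep $s$ and $t$ connected), we get $|F|\geq k$. Hence separating any pair costs at least $k$ edges, so $\lambda(G)\geq k$.

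For the forward implication I would establish the local min-cut equals max-flow equality for each pair $s,t$. To do so I would build a flow network: orient each edge $\{u,v\}$ of $G$ into two arcs $u\to v$ and $v\to u$, each of capacity $1$. An integral $s$--$t$ flow of value $d$ decomposes into $d$ arc-disjoint directed paths; after cancelling any pair of opposite unit flows on the two arcs of a single edge, the flow uses each undirected edge at most once, yielding $d$ edge-disjoint $s$--$t$ paths, and conversely $d$ such paths give a flow of value $d$. Because the capacities are symmetric, any cut $(S,V\setminus S)$ with $s\in S$ and $t\notin S$ has capacity equal to the number of undirected edges of $G$ crossing between $S$ and its complement, so directed minimum cuts correspond exactly to minimum $s$--$t$ edge cuts in $G$.

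With this dictionary in place, the equality of the maximum number of edge-disjoint paths and the minimum edge-cut is precisely the max-flow min-cut theorem, and this combinatorial core is the main obstacle. The key is to show that whenever the current family of edge-disjoint paths is smaller than the minimum cut, there exists an augmenting path increasing the number of disjoint paths by one, and conversely that a flow admitting no augmenting path yields a cut whose size matches the flow value. I would carry this out by the standard augmenting-path argument (equivalently, by induction on $|E(G)|$, deleting or contracting a well-chosen edge), the only delicate point being to track edge-disjointness correctly across the two orientations of each undirected edge when cancelling opposing flows. Once local equality holds for all pairs, taking the minimum over pairs completes the forward implication and hence the theorem.
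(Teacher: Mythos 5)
You should know at the outset that the paper does not prove this statement at all: Menger's theorem is quoted as a classical result with a citation to Menger's 1927 paper and used as a black box, so there is no internal proof to compare yours against. What you propose is the standard modern derivation of the edge version of Menger's theorem via max-flow min-cut, and it is sound as an outline; the reduction of the global statement to the pairwise one is correct (a minimum disconnecting set of $G$ places some pair $s,t$ in different components, so $\lambda(G)$ is the minimum over pairs of the minimum $s$--$t$ edge cut), the easy direction is fine, and the bidirected unit-capacity network with flow cancellation correctly handles the multigraphs this paper allows (parallel edges become parallel arcs). Three details would need to be filled in for a complete write-up. First, an integral flow decomposes into directed paths \emph{and cycles}; you must discard the cycles, which costs nothing since they carry no value. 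Second, the step you defer to ``the standard augmenting-path argument'' is precisely the mathematical content of the theorem, so it cannot remain a pointer: given an integral flow $f$ admitting no augmenting $s$--$t$ path, let $S$ be the set of vertices reachable from $s$ in the residual network; then every arc leaving $S$ is saturated and every arc entering $S$ carries zero flow, so the value of $f$ equals the capacity of the cut $(S, V(G)\setminus S)$, which by your symmetry observation is an $s$--$t$ edge cut of $G$ of the same size. Third, termination of the augmenting process should be noted, though with unit capacities it is immediate: each augmentation increases the value by one and the value is bounded by $\deg(s)$. With those points made explicit your argument is complete; an alternative route closer to Menger's original and to most textbook treatments of the edge version is induction on $|E(G)|$ via deletion/contraction of a suitable edge, which you mention parenthetically and which avoids setting up flow machinery, at the cost of a more delicate case analysis.
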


We define the \emph{degree} of a vertex $v$ to be the number of edges incident to $v$, i.e. the number of edges that have $v$ as an endpoint. A vertex \(u\) of degree two, incident to distinct vertices \(v\) and \(w\) via edges \(e_1\) and \(e_2\), can be ``smoothed over'', meaning we delete \(u\), \(e_1\), and \(e_2\) and add an edge between \(v\) and \(w\).  If a graph $H$ can be obtained from a graph $G$ by deleting vertices, deleting edges, and smoothing over vertices, $H$ is called a \emph{topological minor} of $G$, denoted $H \preceq G$. See Figure \ref{fig:extopomin} for an example.  

\begin{figure}[hbt]
    \centering
    \includegraphics[scale=1]{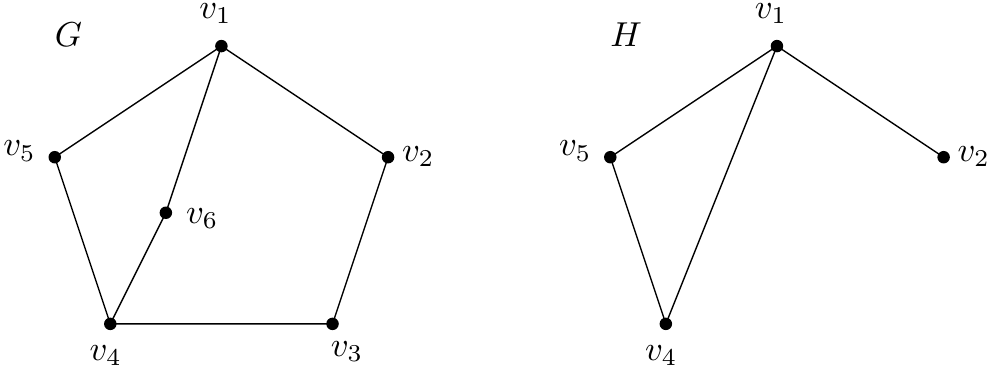}
    \caption{A topological minor \(H\) of a graph \(G\), obtained by deleting $v_5$ and its incident edges, and smoothing $v_6$.}
    \label{fig:extopomin}
\end{figure}

If \(H\) can be obtained from \(G\) simply by deleting edges and vertices, we call \(H\) and \emph{subgraph} of \(G\).  Given a subset \(S\subset V(G)\), we let \(G[S]\) denote the \emph{subgraph induced by \(S\)}; that is, the subgraph with vertex set \(S\) and edge multiset consisting of all edges from \(E(G)\) with both endpoints in \(S\).  If \(G[S]\) is a connected graph, we refer to \(S\) as a \emph{connected subset} of \(V(G)\).

Later on we will use work from \cite{reduction_method} applied to graphs of edge-connectivity \(3\). These results are most easily phrased in the language of \emph{pseudographs}, in which we do allow loops connecting vertices to themselves. A graph \(G\) is called \emph{\(3\)-edge-minimal} if \(\lambda(G)=3\), but for every edge in \(G\) we have \(\lambda(G-e)<3\).  We describe two operations on a graph \(G\):  in the operation $O_1^+$, an edge of \(G\) is subdivided and an edge is added to connect the new vertex to another vertex; and in the operation \(O^{(2)}_1\), an edge of \(G\) is subdivided to yield a new vertex \(z\), and then another edge (not adjacent to \(z\)) is subdivided to yield another new vertex \(w\), and then an edge is added to connect \(z\) and \(w\).

\begin{theorem}[Corollary 17 in \cite{reduction_method}, \(k=1\) case]\label{theorem:reduction_method}  Let \(G\) be a \(3\)-edge minimal multigraph with at least two vertices.  Then either there is a pseudograph \(G_1\) with \(\lambda(G_1)\geq 3\) and \(|V(G_1)|=|V(G)|-1\), from which \(G\) arises by \(O^+_1\); or else there is a pseudograph \(G_2\) with \(\lambda(G_2)\geq 3\) and \(|V(G_2)|=|G|-2\), from which \(G\) arises by \(O_1^{(2)}\).
\end{theorem}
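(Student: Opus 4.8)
The plan is to prove this as a one-step structural reduction: locate a vertex of small degree and show that one of the two operations can be reversed there while preserving edge-connectivity at least $3$. Since $\lambda(G)=3$ forces the minimum degree to be at least $3$, and since $3$-edge-minimality is precisely the condition that $G$ be minimally $3$-edge-connected, I would first apply Mader's theorem on minimally $k$-edge-connected graphs to produce a vertex $v$ of degree exactly $3$. I would also record the consequence of $3$-edge-minimality that will drive the whole argument: deleting any edge $e$ drops $\lambda$ to at most $2$, so $e$ together with a $2$-edge-cut of $G-e$ forms a $3$-edge-cut of $G$ containing $e$; hence every edge of $G$ lies in a nontrivial $3$-edge-cut.

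Next I would make the two reverse operations explicit. To reverse $O_1^+$ at a degree-$3$ vertex $v$ with incident edges $va$, $vb$, $vc$, I delete one edge (say $vc$) and smooth $v$ across the remaining two, merging $va,vb$ into a single edge $ab$; this yields a pseudograph $G_1$ with $|V(G)|-1$ vertices, and a loop appears when $a=b$, which is exactly why the target is allowed to be a pseudograph. To reverse $O_1^{(2)}$, I look for an edge $zw$ both of whose endpoints have degree $3$, delete $zw$, and smooth each of $z$ and $w$, producing $G_2$ with $|V(G)|-2$ vertices (again possibly with loops or parallel edges). In each case the real content is the inequality $\lambda(G_i)\geq 3$, and the strategy is: try to reverse $O_1^+$ at some degree-$3$ vertex, and only if this is impossible everywhere, reverse $O_1^{(2)}$.

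Because $\deg(v)=3$ is odd, the standard Lov\'asz--Mader splitting-off theorems (which require even degree or degree at least $4$) do not apply directly, so I would analyze edge-connectivity by hand through the $3$-edge-cuts passing through the edges at $v$. Using the submodularity of the cut function $d(\cdot)$, namely $d(X)+d(Y)\geq d(X\cap Y)+d(X\cup Y)$, I would study how the $3$-edge-cuts containing $va$, $vb$, $vc$ nest and cross, and use an extremal choice (for instance a $3$-edge-cut that is tight and has an inclusion-minimal side) to pin down when a given splitting of $v$ can fail to keep $\lambda\geq 3$. The aim is to show that for a careful choice of the deleted edge the reduction $G\rightsquigarrow G_1$ succeeds, and to identify the precise local configuration that obstructs every such choice.

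The step I expect to be the main obstacle is the hard case, where reversing $O_1^+$ fails at every degree-$3$ vertex. I would argue that the obstruction configuration forces the neighbors of $v$ to themselves have degree $3$ and to be attached to $v$ rigidly, so that some edge $zw$ joins two degree-$3$ vertices and reversing $O_1^{(2)}$ becomes available; the remaining task is then to verify $\lambda(G_2)\geq 3$ by the same cut analysis applied to both smoothings at once. The delicate point throughout is global rather than local: ruling out the possibility that every candidate reduction, of either type, reintroduces a $2$-edge-cut. I would handle this by choosing $v$ and a tight $3$-edge-cut extremally and deriving a contradiction from any surviving small cut via submodularity, in the spirit of the exchange arguments used in splitting-off proofs.
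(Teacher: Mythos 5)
First, a framing point: the paper does not prove this statement at all --- it is imported verbatim as the \(k=1\) case of Corollary 17 of Mader \cite{reduction_method}, so the only fair benchmark is Mader's original argument, of which your proposal is a partial reconstruction. Your preliminary reductions are sound and match that setting: \(3\)-edge-minimality is exactly minimal \(3\)-edge-connectivity, every edge of such a graph lies in a nontrivial \(3\)-edge-cut, a degree-\(3\) vertex exists (though you should confirm that the vertex-of-degree-\(k\) theorem you invoke is stated for multigraphs, since \(G\) here may have parallel edges), and your descriptions of the reverse operations are correct, including the observation that smoothing across a pair of parallel edges creates a loop, which is precisely why the targets \(G_1,G_2\) must be allowed to be pseudographs.

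The genuine gap is that the entire content of the theorem lies in the dichotomy you defer. What must be shown is: if for every degree-\(3\) vertex \(v\) and every choice of deleted edge at \(v\) the resulting pseudograph has a cut of size at most \(2\), then there exists an edge \(zw\) joining two degree-\(3\) vertices such that deleting \(zw\) and smoothing both endpoints preserves \(\lambda\geq 3\). Your proposal names the right tools --- submodularity of the cut function, tight \(3\)-edge-cuts chosen inclusion-minimally --- but at the decisive step records only an intention (``I would argue that the obstruction configuration forces\ldots''). This is not a routine uncrossing exercise: one must classify how the tight cuts through the three edges at \(v\) cross or nest, show that total failure of the single-vertex reduction forces structure on the neighbors of \(v\), and then verify \(\lambda(G_2)\geq 3\) for the double smoothing, where a new difficulty appears because the two smoothings performed simultaneously can create a \(2\)-cut that neither creates alone. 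You must also respect the side condition in \(O_1^{(2)}\) that the second subdivided edge be non-adjacent to \(z\): in the reverse direction this restricts which edges \(zw\) and which edge pairs at \(z\) and \(w\) are usable (for instance when \(z\) and \(w\) are joined by parallel edges), and your sketch never checks it. As it stands, the proposal is a correct outline of the strategy behind Mader's proof, not a proof; the hard case you flag as ``the main obstacle'' is exactly the theorem.
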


We use this to prove the following corollary, which will be useful in the proof of Theorem \ref{theorem:main_theorem}.
\begin{corollary}\label{corollary:3_edge_top_minors}
    If \(G\) is a \(3\)-edge-connected graph on three or more vertices, then \(G\) contains one of \(K_4\), \(P_{3;3}\), and \(C_{3;2,2,1}\) as a topological minor.
\end{corollary}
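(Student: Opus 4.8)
The plan is to reduce to the case of $3$-edge-minimal graphs and then induct on the number of vertices using Theorem \ref{theorem:reduction_method}. First I would observe that if $G$ is $3$-edge-connected then we may delete edges until we reach a spanning subgraph $H$ that is $3$-edge-minimal; since $H$ is a subgraph of $G$, any topological minor of $H$ is also one of $G$, so it suffices to find one of $K_4$, $P_{3;3}$, $C_{3;2,2,1}$ inside $H$. It is convenient to prove, by strong induction on the number of vertices, the slightly more general statement that every pseudograph with edge-connectivity at least $3$ and at least three vertices has one of the three graphs as a topological minor. Deleting loops changes neither the edge-connectivity nor the relevant topological-minor containments, so throughout the induction we may pass freely to a loopless $3$-edge-minimal graph, which is exactly the object to which Theorem \ref{theorem:reduction_method} applies.

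The engine of the induction is that both operations of Theorem \ref{theorem:reduction_method} exhibit the smaller graph as a topological minor of the larger one. Indeed, if $H$ arises from $G_1$ by $O_1^+$, then deleting the newly added edge leaves the subdivision vertex with degree two, and smoothing it recovers $G_1$; likewise, if $H$ arises from $G_2$ by $O_1^{(2)}$, then deleting the added edge and smoothing the two subdivision vertices recovers $G_2$. Hence $G_1 \preceq H$ (respectively $G_2 \preceq H$), so any forbidden topological minor of the smaller graph is automatically one of $H$. I would therefore apply Theorem \ref{theorem:reduction_method} to $H$: in the $O_1^+$ case the ancestor has $|V(H)|-1$ vertices and edge-connectivity at least $3$, and in the $O_1^{(2)}$ case the ancestor has $|V(H)|-2$ such vertices. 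Whenever the ancestor still has at least three vertices, the inductive hypothesis (applied after deleting its loops) produces a forbidden minor, which then transfers up to $H$ and hence to $G$.

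This leaves the base cases, in which the ancestor has only two vertices or we begin with exactly three vertices, and these I would check by hand. A pseudograph on two vertices with edge-connectivity at least $3$ contains three parallel edges between its two vertices. Applying $O_1^+$ subdivides one of them and reconnects the new vertex to an endpoint, producing a triangle whose edge multiplicities are at least $2,2,1$, so $H$ contains $C_{3;2,2,1}$. Applying $O_1^{(2)}$ subdivides two of the parallel edges and joins the two new vertices, producing all six edges of a $K_4$ on the two old and two new vertices, so $H$ contains $K_4$. Finally, the case of exactly three vertices is a direct multiplicity count: writing the three possible edge multiplicities as $x,y,z$, the three single-vertex cuts give $x+y,\, x+z,\, y+z \geq 3$, which forces either all three edges present with at least two of the multiplicities being at least $2$ (yielding $C_{3;2,2,1}$), or one multiplicity equal to zero with the other two at least $3$ (yielding $P_{3;3}$).

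The main obstacle I anticipate is bookkeeping rather than conceptual, namely confirming that the intermediate pseudographs produced by the reduction theorem, which may carry loops, feed the induction correctly. The subtlety is that if $O_1^+$ or $O_1^{(2)}$ subdivides a loop, the resulting subdivision vertex is incident to a single vertex and so cannot be smoothed in the literal sense of the definition, which momentarily breaks the clean claim $G_i \preceq H$. I would resolve this by deleting the offending loop first, which preserves both the edge-connectivity and the existence of a forbidden minor, and then arguing directly that the loopless core of the ancestor is a topological minor of $H$. Verifying the small base cases with care — in particular that $O_1^{(2)}$ applied to three parallel edges genuinely yields $K_4$, which is precisely why $K_4$ must appear in the list at all — is the other place where attention is required.
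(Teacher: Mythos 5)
Your overall strategy is the same as the paper's: induction on the number of vertices driven by Theorem \ref{theorem:reduction_method}, the observation that the (de-looped) ancestor is a topological minor of the larger graph, and the same three-vertex base case (your multiplicity count on \(x,y,z\) is exactly the paper's dichotomy between an underlying \(P_3\), forcing \(P_{3;3}\), and an underlying \(C_3\), forcing \(C_{3;2,2,1}\)). In fact your explicit first step of passing to a \(3\)-edge-minimal spanning subgraph, and re-minimalizing inside the induction, is \emph{more} careful than the paper, which invokes Theorem \ref{theorem:reduction_method} for \(3\)-edge-connected graphs even though the theorem is stated only for \(3\)-edge-minimal ones; since edge deletions preserve the vertex count and topological-minor containments transfer upward, your reduction is sound and closes that loophole.

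There is, however, one genuine gap: the case where the \(O_1^{(2)}\)-ancestor has exactly two vertices \emph{and} carries loops, i.e.\ \(|V(H)|=4\) and \(G_2=P_{2;n}\) (\(n\geq 3\)) with one or two loops attached. These are legitimate outputs of Theorem \ref{theorem:reduction_method}, since loops never cross a cut and so do not affect \(\lambda\). This case falls between your two mechanisms. Your loop-fix (delete the loops of the ancestor and apply the inductive hypothesis to the loopless core) fails here because the core \(P_{2;n}\) has only two vertices, is outside the scope of the induction, and contains none of the three target graphs. And your two-vertex base case silently assumes the ancestor is loopless: you assert that \(O_1^{(2)}\) ``subdivides two of the parallel edges \dots producing all six edges of a \(K_4\),'' but when the ancestor has a loop, the operation \emph{must} subdivide that loop (otherwise \(H\) would retain it and not be a graph), and the result is not a \(K_4\). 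For instance, with one loop at \(u\), the output has vertices \(u,v,z,w\) with \(u\)--\(z\) doubled, single edges \(z\)--\(w\), \(u\)--\(w\), \(w\)--\(v\), and \(n-1\geq 2\) parallel \(u\)--\(v\) edges; one must then verify directly that this contains a subdivided \(C_{3;2,2,1}\) (branch vertices \(u,v,w\), with the second \(u\)--\(w\) path running through \(z\)), and similarly that the two-loop configurations yield \(C_{3;2,2,1}\). This is exactly the finite check the paper performs in case (iii) of its proof (Figure \ref{figure:loopy_operations}), so the gap is local and fixable by a short hand-verification, but as written your induction does not close.
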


\begin{proof} We prove this by induction on the number of vertices \(|V(G)|\).  If \(|V(G)|=3\), then the underlying simple graph of \(G\) is either the path on three vertices \(P_3\) or the cycle on three vertices \(C_3\).  In the first case, each edge must appear as \(3\) or more parallel copies to ensure \(3\)-edge-connectivity, so \(P_{3;3}\) is a subgraph.  In the second case, at least two edges must appear as \(2\) parallel copies to ensure \(3\)-edge-connectivity, so \(C_{3;2,2,1}\) is a subgraph.

Now let \(|V(G)|\geq 4\), and assume the claim holds for all \(3\)-edge-connected graphs with between \(3\) and \(|V(G)|-1\) vertices.  By Theorem \ref{theorem:reduction_method}, we can obtain \(G\) from a pseudograph \(G_1\) on one fewer vertex using \(O_1^+\), or from a pseudograph \(G_2\) on two fewer vertex using \(O_1^{(2)}\). 

We now deal with several cases.

\begin{itemize}
\item[(i)] The relevant \(G_i\) is a graph on \(3\) or more vertices.  By the inductive hypothesis, \(G_i\) has one of the three graphs as a topological minor; and \(G_i\) is itself a topological minor of \(G\), giving us the desired claim.
\item[(ii)] The relevant \(G_i\) is a graph on \(2\) vertices; this occurs only when \(|V(G)|=4\), and when \(G\) is obtained via \(O_1^{(2)}\) from \(P_{2;n}\) (a graph with two vertices connected by \(n\) edges) for some \(n\geq 3\), as these are all \(3\)-edge-connected graphs on \(2\)-vertices.  Up to symmetry there is only one way to perform \(O_1^{(2)}\) on \(P_{2;n}\), and it yields \(K_4\) as a subgraph.  Thus \(G\) has \(K_4\) as a topological minor.
\item[(iii)] The relevant \(G_i\) is a pseudograph that is not a graph, i.e. \(G_i\) has at least one loop.  Since \(G\) is a graph, the operation must eliminate any loops.  If \(G\) is obtained from \(G_1\) by \(O_1^+\), then the edge that is subdivided must be the loop, say \(l\) rooted at a vertex \(v\in V(G_1)\), where \(G_1-l\) is a simple graph.  We know that \(|V(G_1-l)|=|V(G_1)|=|V(G)|-1\), which is between \(3\) and \(|V(G)|-1\), and since removing \(l\) does not change edge-connectivity we know that \(\lambda(G_1-l)\geq 3\). This allows us to apply our inductive hypothesis to show that \(G_1-l\), and thus \(G\), has one of the three graphs as a topological minor.

If \(G\) is obtained from \(G_2\) by \(O_1^{(2)}\), there are several subcases to consider. The operation \(O_1^{(2)}\) must eliminate any loops in \(G_2\), of which there can be at most \(2\) by the structure of \(O_1^{(2)}\). Let \(L\) be the set of all loops in \(G_2\).  By the same logic as the previous argument, if \(|V(G)|\geq 5\) we have \(G_2-L\) is a \(3\)-edge-connected simple graph on \(3\) or more vertices, giving it (and \(G\)) one of the three graphs as a topological minors by the inductive hypothesis.  The last case to deal with is if \(|V(G)|=4\), and therefore \(|V(G_2)|=2\).  Since \(\lambda(G_2)\geq 3\), we know that \(G_2\) is of the form \(P_{2,n}\) with either \(1\) or \(2\) loops attached, possible on the same vertex or on different vertices.  If one loop, there are two ways to perform \(O_1^{(2)}\) to eliminate the loop, one of which yields \(P_{3;3}\) and the other of which yields \(C_{3;2,2,1}\) as a topological minor.  If two loops, there is a unique way to perform \(O_1^{(2)}\) to eliminate both of them; regardless of the placement of the loops, this yields \(C_{3;2,2,1}\) as a topological minor.  These operations are illustrated in Figure~\ref{figure:loopy_operations}.
\end{itemize}

\begin{figure}[hbt]
   \centering
    \includegraphics[scale=1]{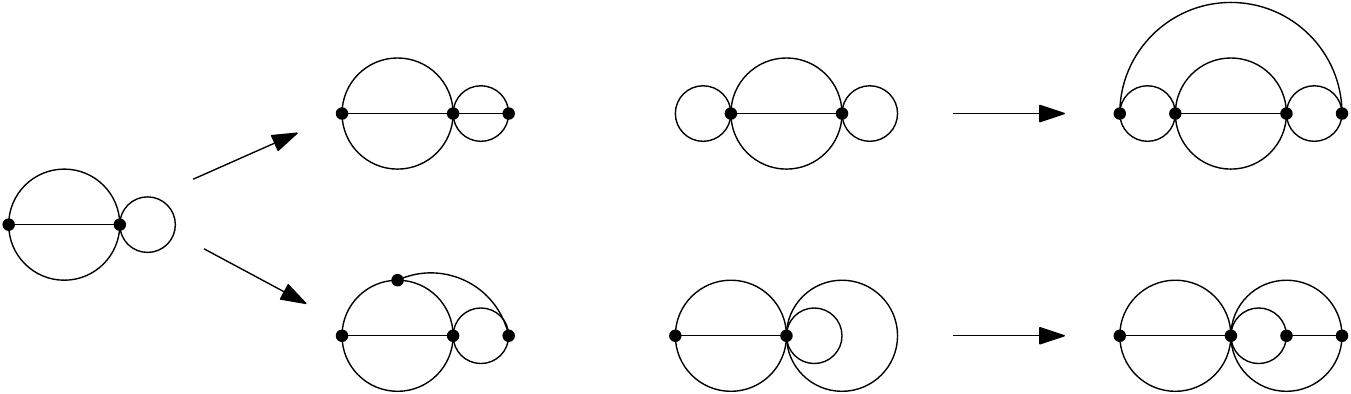}
    \caption{Possible operations to turn \(G_2\) into \(G\), one yielding \(P_{3;3}\) and the others yielding \(C_{3;2,2,1}\) as a toological minor}
    \label{figure:loopy_operations}
\end{figure}

We conclude by induction that every \(3\)-edge connected graph on three or more vertices has one of our three graphs as a topological minor. 
\end{proof}

We now move on to scramble number.  An \emph{egg} on a graph $G$ is a connected subset of vertices.  A \emph{scramble} on a graph $G$ is a collection of eggs on $G$.

Every scramble has an order, which requires several steps to calculate. An \emph{egg-cut} for a scramble is a collection of edges in \(E(G)\) that, when deleted, disconnect the graph into two components, each of which contains an egg.  
The \emph{egg-cut number} of a scramble $\s$, denoted $e(\s)$, is the minimum size of an egg-cut for \(\s\).  A \emph{hitting set} for a scramble is a set of vertices in \(V(G)\) such that every egg contains at least one vertex in that set.
The \emph{hitting number} of a scramble $\s$, denoted $h(\s)$, is the minimum size of a hitting set for $\s$.

These two definitions bring us to the \emph{order} of a scramble $\s$, which is defined as $$||\s||=\min\{h(S), e(S)\}.$$   The \emph{scramble number of a graph $G$} is then the maximum possible order of a scramble $\s$ on $G$.  That is,

$$\sn(G)=\max\limits_{\mathcal{S} \, \text{on}\, G}\{||\mathcal{S}||\}.$$

\begin{example}\label{example:sn_3}
Figure \ref{figure:four_forbidden_scrambles} presents a scramble on each of four graphs; in particular, the eggs are the circled collections of vertices.  Since for these examples all the eggs are disjoint, the hitting number for each scramble is the number of eggs, i.e. \(4\) for the scramble on \(K_4\) and \(3\) for the other three.  Each of the scrambles has an egg-cut number of \(3\); taking the minimum of the two relevant numbers, each scramble has order \(3\).  We remark that the egg-cut number may be smaller than the minimum number of edges incident to an egg; for instance, in the scramble on \(LL_6\), each egg is incident to $4$ edges, but an egg-cut of size \(3\) can be obtained by deleting two parallel edges and the edge opposite them in the underlying cycle graph \(C_6\).  Thus for any graph \(G\) in this figure we have \(\sn(G)\geq 3\); later, in Example \ref{example:scw_3}, we give an argument that \(\sn(G)=3\) for each.
\end{example}

\begin{figure}[hbt]
   \centering
    \includegraphics[scale=1]{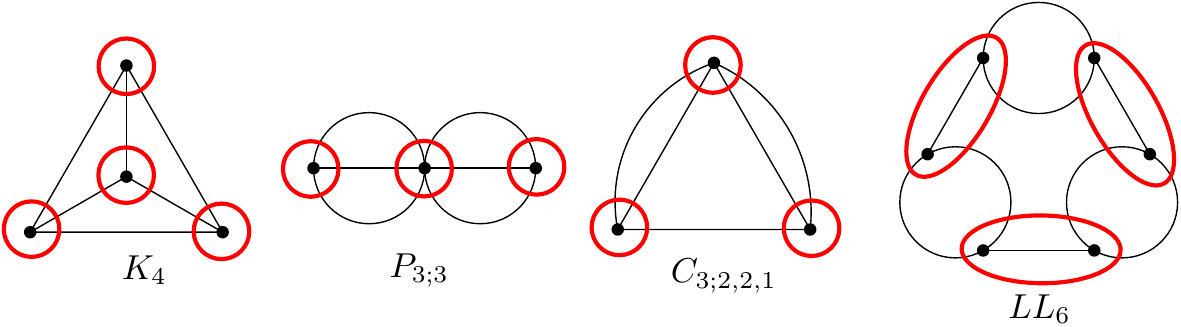}
    \caption{Example of scrambles of order three.}
    \label{figure:four_forbidden_scrambles}
\end{figure}

We now present some previous results.  To do so, we must first provide a definition of a tree. A graph $G$ is a  \emph{tree} if there is exactly one path between each pair of vertices of $G$.  Equivalently, a graph $G$ is a {tree} if $G$ is connected and contains no cycles.  

\begin{lemma}[Corollary 4.2 in \cite{Har}]\label{TREE}

A graph $G$ has $\sn(G)=1$ if and only if $G$ is a tree.
\end{lemma}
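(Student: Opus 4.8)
The plan is to prove both implications, treating $G$ as connected throughout (this is the standard setting for scramble number, and it is forced in the nontrivial direction since trees are connected by definition).

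For the direction that every tree has scramble number $1$: since every graph has positive scramble number, it suffices to show that every scramble $\mathcal{S}$ on a tree $T$ has order at most $1$, i.e.\ that $\min\{h(\mathcal{S}), e(\mathcal{S})\} \le 1$. The key structural input is that each egg, being a connected subset of a tree, induces a subtree, and that subtrees of a tree enjoy the Helly property: any family of pairwise-intersecting subtrees shares a common vertex. I would split into two cases. If the eggs of $\mathcal{S}$ pairwise intersect, the Helly property produces a single vertex lying in every egg, so $h(\mathcal{S}) = 1$. Otherwise some two eggs $A$ and $B$ are disjoint; since $T$ is a tree, the unique path joining them contains an edge $e$ whose deletion places all of $A$ in one component and all of $B$ in the other (the edge leaving $A$ along that path works, as neither endpoint of $e$ interior to the path lies in $A$ or $B$). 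This single edge is an egg-cut, so $e(\mathcal{S}) = 1$. In either case $\|\mathcal{S}\| \le 1$, and combined with positivity we conclude $\sn(T) = 1$.

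For the converse I would argue contrapositively: if the connected graph $G$ is not a tree, then it contains a cycle, and I would exhibit a scramble of order at least $2$. Choosing two distinct vertices $u, v$ on a common cycle, I take $\mathcal{S} = \{\{u\}, \{v\}\}$, the scramble consisting of two singleton eggs. These eggs are disjoint, so any hitting set must contain both $u$ and $v$, giving $h(\mathcal{S}) = 2$. For the egg-cut number, any egg-cut must separate $u$ from $v$ into distinct components; since $u$ and $v$ lie on a common cycle they are joined by two edge-disjoint paths (the two arcs of the cycle), so by Menger's Theorem at least two edges must be deleted, giving $e(\mathcal{S}) \ge 2$. Thus $\|\mathcal{S}\| = \min\{2, e(\mathcal{S})\} = 2$, whence $\sn(G) \ge 2 > 1$.

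The routine portions are the two case analyses; the main obstacle is cleanly justifying the tree direction, specifically the Helly property for subtrees and the existence of a single separating edge for disjoint subtrees. Both are standard facts about trees, but they must be invoked with care so that the resulting hitting set or egg-cut genuinely certifies order at most $1$ for an \emph{arbitrary} scramble, including degenerate scrambles with a single egg, where $h(\mathcal{S}) = 1$ holds trivially.
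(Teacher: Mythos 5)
Your proof is correct, but there is no in-paper argument to compare it against: the paper states this lemma as an imported result (Corollary 4.2 of \cite{Har}) and never reproves it, so the only available comparison is with the standard argument and with fragments appearing elsewhere in the paper. Taken on its own terms, your proof is sound and self-contained. The forward direction---pairwise-intersecting eggs share a common vertex by the Helly property of subtrees, giving \(h(\mathcal{S})=1\), while two disjoint eggs in a tree are separated by a single edge, giving \(e(\mathcal{S})=1\)---is a clean direct certificate that avoids any appeal to gonality or treewidth machinery, and you are right to note it covers the degenerate one-egg case. One wording slip is worth fixing: your separating edge \(e\) \emph{does} have an endpoint in \(A\), namely the last vertex of the unique \(a\)--\(b\) path that lies in \(A\); the correct justification is that a subtree intersects a path in a subpath, so once the path leaves \(A\) it never returns, and since \(e\) is the unique edge joining the two components of \(T-e\), the egg \(B\) (being disjoint from \(A\), hence avoiding that endpoint) lies entirely in the other component. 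This is cosmetic, not a gap. Your converse---two singleton eggs on a cycle, \(h(\mathcal{S})=2\) and \(e(\mathcal{S})\geq 2\) from the two edge-disjoint arcs---is essentially verbatim the argument this paper itself deploys in Section \ref{section:applications} for the case \(\sn(G)=2\) of the proposition on disjoint scramble number; Menger's Theorem is not even needed there, since each of the two edge-disjoint \(u\)--\(v\) paths must individually contain a deleted edge. Finally, your restriction to connected \(G\) matches the paper's convention (it qualifies the classification with ``(connected)'' in the introduction), and your argument correctly handles the paper's multigraph setting: loops are disallowed, so every cycle---including one formed by two parallel edges---contains two distinct vertices.
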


\begin{lemma}[Lemma 2.5 in \cite{Ech}]\label{edgeconnect}
Let $G$ be a graph.  Then, $\sn(G)\geq \min\{\lambda(G),|V(G)|\}$.
\end{lemma}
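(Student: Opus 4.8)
The plan is to exhibit a single scramble whose order already meets the bound; since $\sn(G)$ is by definition the maximum order over all scrambles on $G$, producing one scramble $\s$ with $||\s|| \geq \min\{\lambda(G), |V(G)|\}$ is enough. The natural candidate is the \emph{all-singletons scramble}: I would let $\s = \{\{v\} : v \in V(G)\}$, taking each vertex as its own egg. Each singleton is a connected subset of $V(G)$, so $\s$ is a legitimate scramble, and the whole argument then reduces to computing its hitting number and its egg-cut number.

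First I would compute the hitting number $h(\s)$. A hitting set must contain a vertex from every egg, and here the eggs are the $|V(G)|$ distinct singletons $\{v\}$; the only set meeting all of them is $V(G)$ itself. Hence $h(\s) = |V(G)|$.

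Next I would compute the egg-cut number $e(\s)$. An egg-cut is a set of edges whose removal splits $G$ into two components, each containing an egg. Because every vertex is itself an egg, any edge set that separates $G$ into two nonempty parts automatically places an egg on each side; conversely, any egg-cut disconnects $G$ into two nonempty pieces and so is an ordinary disconnecting edge set. Thus the egg-cuts for $\s$ coincide exactly with the edge sets whose deletion disconnects $G$, and the minimum size of such a set is by definition $\lambda(G)$. This gives $e(\s) = \lambda(G)$.

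Combining the two computations, $||\s|| = \min\{h(\s), e(\s)\} = \min\{|V(G)|, \lambda(G)\}$, and therefore $\sn(G) \geq \min\{\lambda(G), |V(G)|\}$, as claimed. The only step requiring genuine care is the identification $e(\s) = \lambda(G)$: one must check that, for singleton eggs, the definitional requirement ``each component contains an egg'' is equivalent to ``each component is nonempty,'' and in particular confirm the degenerate case where $G$ is disconnected, so that $\lambda(G) = 0$, where the empty egg-cut already suffices and the bound reads $\sn(G) \geq 0$. Beyond that, the argument is a direct unwinding of the definitions, so I do not expect any serious obstacle.
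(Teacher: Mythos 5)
Your proof is correct and is essentially the standard argument: the paper cites this lemma from Echavarria et al.\ without reproving it, but the proof there (and the way this paper itself uses the idea, e.g.\ the vertex scramble on $C_{n;k}$ in Lemma \ref{lemma:minimal_even}) is exactly your all-singletons scramble with $h(\s)=|V(G)|$ and $e(\s)=\lambda(G)$. Your attention to the degenerate cases (disconnected $G$, and the identification of egg-cuts with ordinary disconnecting edge sets) is the right place to be careful, and you handled it correctly.
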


This is a powerful result for us as it is often easier to compute $\lambda(G)$ than trying to compute $\sn(G)$ directly.  For the next lemma, we define a \emph{bridge} to be an edge in $G$ such that deleting the edge disconnects $G$. 

\begin{lemma}[Lemma 2.4 in \cite{Ech}]\label{bridge}
If $G$ is a graph with bridge $e$ and the two connected components of \(G-e\) are $G_1$ and $G_2$, then $\sn(G)= \max\{\sn(G_1), \sn(G_2)\}$.

\end{lemma}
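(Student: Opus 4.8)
The plan is to prove the two inequalities $\sn(G)\ge\max\{\sn(G_1),\sn(G_2)\}$ and $\sn(G)\le\max\{\sn(G_1),\sn(G_2)\}$ separately. Write $e=u_1u_2$ with $u_1\in V(G_1)$ and $u_2\in V(G_2)$. The guiding observation throughout is that, since $e$ is a bridge, any connected subset (egg) of $V(G)$ that meets both $V(G_1)$ and $V(G_2)$ must contain both $u_1$ and $u_2$, and its intersection with each $V(G_i)$ is itself connected in $G_i$.

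For the lower bound, I would take an optimal scramble $\s_i$ on $G_i$ (say $i=1$) and observe that, because every egg of $\s_1$ lies in $V(G_1)\subseteq V(G)$, it is also a scramble on $G$. Any hitting set for $\s_1$ in $G$ may be assumed to lie in $V(G_1)$, since vertices of $G_2$ hit no egg; hence $h_G(\s_1)=h_{G_1}(\s_1)$. Likewise, any egg-cut partition of $V(G)$ restricts to a partition of $V(G_1)$ in which each side still contains an egg, and the crossing edges lying within $G_1$ form a subset of all crossing edges; thus $e_G(\s_1)\ge e_{G_1}(\s_1)$. Together these give $\|\s_1\|_G\ge\|\s_1\|_{G_1}=\sn(G_1)$, and the symmetric argument for $G_2$ yields the full lower bound.

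For the upper bound I would fix an arbitrary scramble $\s$ on $G$ and classify its eggs as lying in $V(G_1)$, lying in $V(G_2)$, or \emph{spanning} (containing both $u_1$ and $u_2$). If $\s$ has an egg inside $V(G_1)$ and an egg inside $V(G_2)$, then deleting the single edge $e$ is an egg-cut, so $\|\s\|\le 1\le\max\{\sn(G_1),\sn(G_2)\}$ by positivity of scramble number. Otherwise, up to symmetry no egg lies entirely in $V(G_2)$, so every egg meets $V(G_1)$; I would then project $\s$ to the scramble $\s'=\{E\cap V(G_1):E\in\s\}$ on $G_1$, each egg of which is connected and, for spanning eggs, contains $u_1$. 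The point is to show $h_G(\s)\le h_{G_1}(\s')$ and $e_G(\s)\le e_{G_1}(\s')$. The first holds because any hitting set for $\s'$ inside $V(G_1)$ also hits every egg of $\s$. For the second, I would take a minimum egg-cut partition of $V(G_1)$ for $\s'$, place $u_1$ on one side, and enlarge that side by all of $V(G_2)$; this adds no crossing edges, since the bridge $e$ and all internal edges of $G_2$ now sit on a single side. The crux is that the opposite side lies inside $V(G_1)\setminus\{u_1\}$, so the egg certifying it cannot be spanning and is therefore a genuine egg of $\s$ fully contained there, while the near side absorbs $G_2$ and still contains its egg. Hence the enlarged partition is a valid egg-cut for $\s$ of the same size, giving $\|\s\|_G\le\|\s'\|_{G_1}\le\sn(G_1)$.

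I expect the main obstacle to be exactly this egg-cut construction in the upper bound: one must verify that extending a $G_1$-egg-cut across the bridge leaves both sides populated by true eggs of $\s$, and the spanning eggs are precisely what threaten this. The resolution is the observation that a spanning egg always projects to an egg of $\s'$ containing $u_1$, so it can never be the egg certifying the far side; the remaining edge cases then follow, since when every egg is spanning the single vertex $u_1$ is already a hitting set and $\|\s\|\le 1$. Taking the maximum over all scrambles $\s$ completes the upper bound and hence the lemma.
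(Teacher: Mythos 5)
Your proposal is correct, but note that there is nothing in the paper to compare it against: the paper imports this lemma verbatim from \cite{Ech} (their Lemma 2.4) and gives no proof, so your argument is judged on its own merits, and it holds up. Both directions are sound. For the lower bound, your hand computation of $h$ and $e$ for a scramble on $G_1$ viewed in $G$ works, though it is redundant given the paper's toolkit: $G_1$ and $G_2$ are subgraphs of $G$, so $\sn(G)\geq\max\{\sn(G_1),\sn(G_2)\}$ is immediate from Lemma \ref{lemma:subgraph}, and this is also how \cite{Ech} dispatches that half. For the upper bound, your case split is exhaustive, and you correctly identified and resolved the one delicate point: when extending a minimum egg-cut of $\s'=\{E\cap V(G_1):E\in\s\}$ across the bridge, the side not containing $u_1$ must be certified by an honest egg of $\s$, and this works precisely because a spanning egg projects to an egg containing $u_1$, so the far-side egg of $\s'$ is equal to its (non-spanning, hence uncut) preimage in $\s$. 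Enlarging the $u_1$-side by $V(G_2)$ adds no crossing edges since $e$ is the unique edge between the parts, and the preimage of the near-side egg lies in $A\cup V(G_2)$, so the cut size is preserved; together with the easy inequality $h(\s)\leq h(\s')$ this gives $||\s||\leq||\s'||\leq\sn(G_1)$. One small bookkeeping point worth making explicit in a final write-up: when all eggs of $\s'$ pairwise intersect one should set $e(\s')=\infty$ (the convention the paper itself uses in Proposition \ref{ugh}), under which your inequality $e(\s)\leq e(\s')$ is vacuously fine. Your restriction $\s\mapsto\s'$ is exactly the restriction construction $\s|_H$ the paper develops in Section 3, and your overall route matches the proof in \cite{Ech}, so this is the standard argument, correctly executed.
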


These next two lemmas will allow us to conclude that scramble number is \emph{topological minor monotone}; that is, that scramble number can only decrease or remain unchanged when taking a topological minor.  This is an important step in using topological minors to study scramble numbers.  

\begin{lemma}[Proposition 4.5 in \cite{Har}]\label{lemma:subgraph}

If $H$ is a subgraph of $G$, $\sn(H)\leq \sn(G)$.

\end{lemma}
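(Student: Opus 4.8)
The plan is to take a scramble on \(H\) realizing its scramble number and argue that, reinterpreted as a scramble on \(G\), its order can only increase. Concretely, let \(\s\) be a scramble on \(H\) with \(||\s|| = \sn(H)\). Since \(H\) is a subgraph of \(G\), every vertex of \(H\) is a vertex of \(G\) and every edge of \(H\) is an edge of \(G\); in particular, any set of vertices that is connected in \(H\) remains connected in \(G\), so each egg of \(\s\) is a legitimate egg of \(G\). Thus \(\s\) is also a scramble on \(G\), and since \(\sn(G)\) is the maximum order over all scrambles on \(G\), it suffices to show that \(||\s||_G \geq ||\s||_H\), where the subscripts indicate which ambient graph is used to compute the order.

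Because the order is the minimum of the hitting number and the egg-cut number, I would compare these two quantities separately. For the hitting number, note that every egg of \(\s\) is contained in \(V(H)\), so a vertex of \(V(G) \setminus V(H)\) lies in no egg and is useless in any hitting set. Hence if \(T\) is a minimum hitting set for \(\s\) in \(G\), then \(T \cap V(H)\) is still a hitting set and has size at most \(|T|\); this shows \(h_H(\s) \leq h_G(\s)\).

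The main work is the egg-cut comparison, where I would show \(e_H(\s) \leq e_G(\s)\). Take a minimum egg-cut for \(\s\) in \(G\), corresponding to a partition \(V(G) = A \sqcup B\) with one egg contained in \(A\) and another contained in \(B\), and with \(e_G(\s)\) edges running between \(A\) and \(B\). Restricting to \(H\) gives the partition \(V(H) = (A \cap V(H)) \sqcup (B \cap V(H))\); since eggs live in \(V(H)\), the two chosen eggs still lie on opposite sides, so this is a valid egg-cut for \(\s\) in \(H\). Its size is the number of edges of \(H\) crossing the partition, and since every such edge is also an edge of \(G\) crossing between \(A\) and \(B\), this number is at most \(e_G(\s)\). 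Therefore \(e_H(\s) \leq e_G(\s)\).

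Combining the two inequalities and using that the minimum is monotone in each argument yields \(||\s||_G = \min\{h_G(\s), e_G(\s)\} \geq \min\{h_H(\s), e_H(\s)\} = ||\s||_H = \sn(H)\), and since \(||\s||_G \leq \sn(G)\) we conclude \(\sn(H) \leq \sn(G)\). The one point requiring care is the egg-cut step: one must confirm that restricting the cut to \(H\) both preserves the separation of two eggs and does not create new crossing edges, which is exactly where the subgraph hypothesis \(E(H) \subseteq E(G)\) is used.
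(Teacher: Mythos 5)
The paper itself contains no proof of this lemma: it is imported verbatim as Proposition 4.5 of \cite{Har}, so there is nothing internal to compare against. Your argument is correct and is essentially the standard proof of subgraph monotonicity: a scramble \(\s\) realizing \(\sn(H)\) is reinterpreted as a scramble on \(G\) (connectivity of eggs persists since \(E(H)\subseteq E(G)\)), with \(h_H(\s)\leq h_G(\s)\) because vertices of \(V(G)\setminus V(H)\) hit no eggs, and \(e_H(\s)\leq e_G(\s)\) because a minimum egg-cut of \(G\) restricts to an egg-cut of \(H\) separating the same two eggs without creating new crossing edges. Two small points are worth making explicit for full rigor: the degenerate case \(e_G(\s)=\infty\) (all eggs pairwise intersect, so no egg-cut exists in either graph and the inequality is vacuous), and the passage from the paper's ``disconnecting set'' phrasing of an egg-cut to your bipartition form \(V(G)=A\sqcup B\), which is the usual harmless normalization obtained by taking \(A\) to be the component containing one of the separated eggs; note also that each egg stays inside a single component after the restricted cut because its induced subgraph uses no crossing edges, which is implicitly where your argument uses that eggs are connected subsets.
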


We remark that the same result does not hold if \(G'\) is a minor of \(G\); that is, it is possible for scramble number to increase when an edge is contracted \cite[Example 4.4]{Har}.

\begin{lemma}[Proposition 4.6 in \cite{Har}]\label{lemma:smoothing}

If $H$ is obtained from $G$ by smoothing vertices, $\sn(H) =\sn(G)$.
\end{lemma}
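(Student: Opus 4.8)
The plan is to reduce to a single smoothing and then to argue that subdivision, its inverse operation, leaves the scramble number unchanged; iterating then handles several smoothings at once. So I would fix a degree-two vertex \(u\) of \(G\) adjacent to distinct vertices \(v,w\) via edges \(e_1=uv\) and \(e_2=uw\), and let \(H\) be the graph obtained by smoothing \(u\), so that \(G\) is recovered from \(H\) by subdividing the edge \(f=vw\) into the path \(v\)--\(u\)--\(w\). I will prove \(\sn(G)=\sn(H)\) as two inequalities, transporting a scramble across the operation in each direction and checking that neither the hitting number nor the egg-cut number ever drops.

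For \(\sn(G)\ge\sn(H)\) I would take an optimal scramble \(\mathcal{S}\) on \(H\) and lift it to \(G\) by adjoining \(u\) to every egg that already contains both \(v\) and \(w\), carrying all other eggs over unchanged. Each lifted set is connected in \(G\): an egg not containing both of \(v,w\) induces the same subgraph in \(G\) as in \(H\), while for an egg containing both, the path \(v\)--\(u\)--\(w\) restores whatever connectivity the now-absent edge \(f\) had provided. To see the order does not drop, I would convert any hitting set of the lifted scramble into one of \(\mathcal{S}\) of no larger size by replacing \(u\) with \(v\), and convert any egg-cut of the lifted scramble into an egg-cut of \(\mathcal{S}\) of no larger size by trading the pair \(\{e_1,e_2\}\) for the single edge \(f\) exactly when \(u\) lies strictly between the two sides. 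A short case analysis on which side each of \(v,w\) falls shows the cut size never increases and that each side still contains a full egg.

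For the reverse inequality \(\sn(H)\ge\sn(G)\) I would take an optimal scramble \(\mathcal{T}\) on \(G\) and project it to a scramble \(\mathcal{T}^-\) on \(H\) by deleting \(u\) from every egg that contains it. The key point is once more connectivity: if an egg \(E\ni u\) has size at least two, then \(E\setminus\{u\}\) remains connected in \(H\), since every passage \(v\)--\(u\)--\(w\) inside \(E\) can be rerouted along the new edge \(f\). Reusing the same vertex sets and edge sets, a hitting set or egg-cut of \(\mathcal{T}^-\) pulls back to one for \(\mathcal{T}\) of no larger size, which generically gives \(h(\mathcal{T}^-)\ge ||\mathcal{T}||\) and \(e(\mathcal{T}^-)\ge ||\mathcal{T}||\), and hence \(\sn(H)\ge ||\mathcal{T}^-||\ge ||\mathcal{T}||=\sn(G)\).

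The hard part, and the only place the projection can fail to preserve size, is the degenerate behaviour of eggs that genuinely depend on \(u\). Two such situations arise: a singleton egg \(\{u\}\) has no image in \(H\) yet forces \(u\) into every hitting set of \(\mathcal{T}\); and an egg-cut of \(\mathcal{T}^-\) may have both of its witnessing eggs arising from \(G\)-eggs that require \(u\), so that \(u\) cannot be placed on a single side when lifting the cut back to \(G\). I expect to resolve these through the size-two cut \(\{e_1,e_2\}\) isolating the degree-two vertex \(u\): whenever a genuinely \(u\)-dependent egg occurs, this cut (or the single vertex that is thereby forced into every hitting set) bounds the order of the scramble by \(2\), so the troublesome configurations only occur when \(\sn(G)\le 2\). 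The residual low-order cases are then settled directly with Lemma \ref{TREE}, together with the observation that smoothing neither creates nor destroys cycles, so that \(G\) is a tree precisely when \(H\) is. Combining the generic transport with this degenerate analysis gives \(\sn(G)=\sn(H)\), and iterating over all smoothed vertices finishes the proof.
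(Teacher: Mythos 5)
First, a point of comparison: the paper does not prove this lemma at all --- it imports it verbatim as Proposition 4.6 of \cite{Har} --- so your attempt must be judged on its own terms rather than against an in-paper argument. Your architecture (reduce to one smoothing, transport scrambles in both directions, quarantine the eggs that genuinely depend on $u$) is sound, and most of it checks out: the lift from $H$ to $G$ preserves both the hitting and egg-cut numbers, the generic projection from $G$ to $H$ does too, and the singleton egg $\{u\}$ really is harmless --- if every egg contains $u$ then $h(\mathcal{T})=1$, and otherwise $\{e_1,e_2\}$ (or one of $e_1,e_2$, if $G-u$ is disconnected) is an egg-cut, so $||\mathcal{T}||\le 2$ and your tree/cycle fallback via Lemma \ref{TREE} settles those low orders.

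The gap is in your second degenerate case, where an egg-cut $T'$ of $\mathcal{T}^-$ has both witnessing eggs coming from eggs $E_1\ni u,v$ and $E_2\ni u,w$ of $\mathcal{T}$ (these necessarily meet exactly in $u$, and $f\in T'$). Your proposed resolution --- that the cut $\{e_1,e_2\}$, or a forced hitting vertex, bounds $||\mathcal{T}||$ by $2$, so this configuration only occurs when $\sn(G)\le 2$ --- is false: $\{e_1,e_2\}$ isolates $u$, but neither $E_1$ nor $E_2$ is contained in the component $\{u\}$, so it is not an egg-cut, and no single vertex is forced into every hitting set. Concretely, let $G$ consist of $v$, $w$, four vertices $m_1,\dots,m_4$ each joined to both $v$ and $w$ by doubled edges, and $u$ joined to $v$ and $w$; the scramble $\{\{u,v\},\{u,w\},\{m_1\},\dots,\{m_4\}\}$ has hitting number $5$ and egg-cut number $4$, hence order $4$, yet contains exactly the configuration you call troublesome. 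What is true --- and what your proof is missing --- is that any such cut $T'$ is automatically large. Every egg of $\mathcal{T}$ avoiding $u$ is connected in $G-u=H-f$, so either some such egg lies inside a single component of $H-T'$, in which case $(T'\setminus\{f\})\cup\{e_1\}$ or $(T'\setminus\{f\})\cup\{e_2\}$ is an egg-cut for $\mathcal{T}$ in $G$ of size $|T'|$ (separating that egg from $E_2$ or $E_1$, respectively); or every egg avoiding $u$ contains both endpoints of some edge of $T'\setminus\{f\}$, in which case $u$ together with one endpoint of each edge of $T'\setminus\{f\}$ is a hitting set of size $|T'|$. Either way $||\mathcal{T}||\le|T'|$, so $e(\mathcal{T}^-)\ge||\mathcal{T}||$ after all --- but it is this size argument, not the order-at-most-$2$ dichotomy you propose, that closes the case, and without it your second inequality does not go through.
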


We can now provide a result on topological minors

\begin{corollary}\label{corollary:top_minor}
If $H$ is a topological minor of $G$, then $\sn(H) \leq \sn(G)$. 
\end{corollary}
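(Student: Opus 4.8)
The plan is to prove Corollary~\ref{corollary:top_minor} by decomposing the passage from $G$ to its topological minor $H$ into a sequence of elementary operations, each of which is already known not to increase the scramble number. By the definition of topological minor given earlier in the excerpt, $H$ is obtained from $G$ by some finite sequence of three operations: deleting edges, deleting vertices, and smoothing over degree-two vertices. The strategy is therefore to handle each operation type separately, invoke the appropriate lemma, and then compose the inequalities.

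First I would observe that deleting edges and deleting vertices are exactly the operations that produce a \emph{subgraph}, per the definition in the excerpt. Thus any intermediate graph obtained from $G$ by performing only deletions is a subgraph of $G$, and Lemma~\ref{lemma:subgraph} gives that its scramble number is at most $\sn(G)$. Separately, Lemma~\ref{lemma:smoothing} tells us that smoothing a vertex leaves the scramble number unchanged (indeed equal). Since $H$ arises from $G$ by interleaving these operations, I would write $G = G_0, G_1, \dots, G_k = H$ for the sequence of graphs produced one operation at a time, so that each $G_{i+1}$ is obtained from $G_i$ by a single deletion or smoothing. For each step, either $G_{i+1}$ is a subgraph of $G_i$ (a single edge- or vertex-deletion), whence $\sn(G_{i+1}) \le \sn(G_i)$ by Lemma~\ref{lemma:subgraph}, or $G_{i+1}$ is obtained by smoothing a vertex of $G_i$, whence $\sn(G_{i+1}) = \sn(G_i)$ by Lemma~\ref{lemma:smoothing}. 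In either case $\sn(G_{i+1}) \le \sn(G_i)$, and chaining these inequalities along the sequence yields $\sn(H) = \sn(G_k) \le \sn(G_0) = \sn(G)$.

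The only real subtlety — and what I would expect to be the main point requiring a word of care — is that Lemma~\ref{lemma:subgraph} is stated for a single subgraph relation rather than for an individual edge- or vertex-deletion, and that smoothing and deletion can be interleaved in any order. This is not a genuine obstacle, since a single edge- or vertex-deletion does produce a subgraph, and the telescoping argument above does not require the deletions to be grouped together; each step is covered by exactly one of the two lemmas. One might alternatively note that deletions always yield a subgraph and smoothings commute with the subgraph relation enough to reorder all smoothings to the end, but the step-by-step telescoping avoids needing any such commutativity claim. I would therefore present the proof concisely as the composition of single-operation inequalities, concluding $\sn(H) \le \sn(G)$.
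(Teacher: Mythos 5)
Your proposal is correct and takes essentially the same approach as the paper: both reduce the claim to Lemma~\ref{lemma:subgraph} and Lemma~\ref{lemma:smoothing} applied to the decomposition of a topological minor into deletions and smoothings. The paper merely uses the normal form of this decomposition (a single subgraph $G'$ of $G$ from which $H$ arises by smoothings alone, giving $\sn(H)=\sn(G')\leq\sn(G)$) rather than your step-by-step telescoping, which is the same idea with different bookkeeping.
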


\begin{proof}
If \(H\) is a topological minor of \(G\), then there exists a subgraph \(G'\) of \(G\) such that \(H\) is obtained from \(G'\) by smoothing at vertices.  By Lemma \ref{lemma:smoothing}, we have \(\sn(H)=\sn(G')\), and by Lemma \ref{lemma:subgraph}, we have \(\sn(G')\leq \sn(G)\). Thus \(\sn(H)\leq \sn(G)\).
\end{proof}

It follows that if we can find the scramble number of a topological minor of $G$, we have also found a lower bound on the scramble number of $G$ itself.  This process provides the underlying structure for proving Theorem \ref{theorem:main_theorem} in Section \ref{section:sn2}.

The final topic we recall from a previous paper is the \emph{screewidth} of a graph, introduced in \cite{screewidth}.  Given a graph \(G\), a \emph{tree-cut decomposition} is a pair \((T,\mathcal{X})\) such that \(T\) is a tree and \(\mathcal{X}\) is a set of subsets \(X_b\) of \(V(G)\), one for each \(b\in V(T)\), such that
\begin{itemize}
\item \(\bigcup_{b\in V(T)}X_b=V(G)\), and
\item \(X_{b_1}\cap X_{b_2}=\emptyset\) for \(b_1\neq b_2\).
\end{itemize}
That is, \(\mathcal{X}\) forms a \emph{near partition} of \(V(G)\), which is a partition with empty sets allowed.  For clarity, we will refer to the vertices and edges of \(T\) as \emph{nodes} and \emph{links}, respectively, reserving the terms \emph{vertices} and \emph{edges} for \(G\).

For \(l\in E(T)\), deleting \(l\) from \(T\) partitions the vertices of \(V(T)\), and thus the vertices of \(V(G)\), into two sets. The \emph{(link) adhesion of \(l\)}, denoted \(\adh(l)\), is the set of edges in \(E(G)\) connecting two vertices of \(G\) in these different sets.  Similarly, for \(b\in V(T)\) not a leaf, deleting \(b\) from \(V(T)\) partitions the vertices of \(V(T)\), and thus the vertices of \(V(G)\), into at least two sets.  The \emph{(node) adhesion of \(b\)}, denoted \(\adh(b)\), is the set of edges in \(E(G)\) connecting two vertices in these different sets. 

These adhesions admit an intuitive description.  Given a tree-cut decomposition of a graph \(G\), draw \(G\) in a thickened copy of \(T\) so that a vertex \(v\in V(G)\) is within the node \(b\in T\) such that \(v\in X_b\); and so that an edge connecting \(u,v\in V(G)\) is drawn along the unique path from \(b_1\) to \(b_2\) in \(T\), where \(u\in X_{b_1}\) and \(v\in X_{b_2}\).  The adhesion of a link \(l\) is then the number of edges from \(E(G)\) drawn passing through \(l\); and the adhesion of a node \(b\) is the number of edges from \(E(G)\) passing through \(b\) with neither endpoint in \(X_b\).

The \emph{width} of the tree-cut decomposition \(\mathcal{T}=(T,\mathcal{X})\) is defined to be the maximum of the following numbers:
\begin{itemize}
\item \(\max_{l\in E(T)}|\adh(l)|\);
\item \(\max_{b\in V(T)}|\adh(b)|+|X_b|\).
\end{itemize}
Finally, the \emph{screewidth} of a graph \(G\), denoted \(\scw(G)\), is the minimum possible width of a tree-cut decomposition of \(G\)

\begin{theorem}[Theorem 1.1 in \cite{screewidth}]\label{theorem:sn_scw} For any graph \(G\), we have \(\sn(G)\leq \scw(G)\).
\end{theorem}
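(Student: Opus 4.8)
The plan is to reduce the whole statement to a single universal inequality: for every scramble \(\s\) on \(G\) and every tree-cut decomposition \(\mathcal{T}=(T,\mathcal{X})\) of width \(w\), we have \(\|\s\|\leq w\). Granting this, choosing \(\s\) to be a scramble of maximum order and \(\mathcal{T}\) a decomposition of minimum width immediately yields \(\sn(G)\leq\scw(G)\). So everything reduces to bounding the order of a fixed scramble against the width of a fixed decomposition.

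To do so I would argue by contradiction, assuming \(\|\s\|=\min\{h(\s),e(\s)\}>w\), so that both \(h(\s)>w\) and \(e(\s)>w\). The first observation concerns the links. Each link \(l\in E(T)\) splits \(V(G)\) into the two sides \(A_l,B_l\) coming from the components of \(T-l\), and \(\adh(l)\) is exactly the set of edges of \(G\) crossing this bipartition, so deleting \(\adh(l)\) separates \(A_l\) from \(B_l\). If both \(A_l\) and \(B_l\) contained an egg (as a subset), then \(\adh(l)\) would be an egg-cut, forcing \(e(\s)\leq|\adh(l)|\leq w\), a contradiction. Hence for every link at most one side contains an egg.

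I would then orient \(T\): point each link toward whichever side contains an egg when exactly one does, and orient the remaining links arbitrarily. Since \(T\) is a tree, the orientation is acyclic, so a sink node \(b^{\ast}\) exists, meaning every link at \(b^{\ast}\) points toward it; for each neighbor \(b'\) this forces the far branch (the \(b'\)-side of \(b^{\ast}b'\)) to contain no egg. The key step is to verify that
\[
H=X_{b^{\ast}}\cup\{\text{one endpoint of each edge of }\adh(b^{\ast})\}
\]
is a hitting set. Any egg meeting \(X_{b^{\ast}}\) is hit directly. An egg \(E\) avoiding \(X_{b^{\ast}}\) cannot lie entirely in one branch, since no branch contains an egg, so \(E\) has vertices in two different branches; as \(G[E]\) is connected, some edge of \(G[E]\) passes from one branch to another through \(b^{\ast}\) with neither endpoint in \(X_{b^{\ast}}\), hence lies in \(\adh(b^{\ast})\), and its chosen endpoint lies in \(E\). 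Thus \(H\) hits every egg and \(|H|\leq|X_{b^{\ast}}|+|\adh(b^{\ast})|\leq w\), contradicting \(h(\s)>w\).

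The main obstacle I anticipate is the bookkeeping around \(b^{\ast}\): making precise that an edge of \(G\) joining two distinct branches of \(b^{\ast}\), with neither endpoint in \(X_{b^{\ast}}\), is exactly what \(\adh(b^{\ast})\) records, and handling the degenerate case where \(b^{\ast}\) is a leaf, so that \(\adh(b^{\ast})=\emptyset\) and \(H=X_{b^{\ast}}\); there the single incident link still forces every egg to meet \(X_{b^{\ast}}\). I would also want to confirm that a link whose two sides each contain an egg genuinely produces an egg-cut of size \(|\adh(l)|\) under the paper's ``two components'' phrasing, which amounts to observing that the induced bipartition separates the two eggs.
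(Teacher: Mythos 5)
Your proposal is correct. Note that the paper itself gives no proof of this statement---it is quoted as Theorem 1.1 of the cited screewidth paper---and your argument reconstructs essentially the proof from that reference: fix a scramble \(\s\) and a tree-cut decomposition of width \(w\); if some link \(l\) has an egg on each side of its bipartition, then \(\adh(l)\) separates them and \(e(\s)\leq|\adh(l)|\leq w\); otherwise orient each link toward its egg-containing side, take a sink \(b^{\ast}\) of this acyclic orientation, and check that \(X_{b^{\ast}}\) together with one endpoint of each edge of \(\adh(b^{\ast})\) is a hitting set of size at most \(|X_{b^{\ast}}|+|\adh(b^{\ast})|\leq w\). All the delicate points are handled: since an egg \(E\) missing \(X_{b^{\ast}}\) lies in no single branch, a path in the induced subgraph \(G[E]\) must use a crossing edge \emph{both} of whose endpoints lie in \(E\), so the arbitrary choice of endpoint still hits \(E\); and the leaf case is treated. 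The one caveat you flagged yourself is real but harmless: this paper's phrasing of an egg-cut as disconnecting \(G\) into ``two components'' is a looseness inherited from the literature, and under the standard definition (two eggs lying in distinct components of \(G\) minus the cut, equivalently a vertex bipartition with an egg on each side) your Step 1 is immediate, since every edge joining the two sides of \(T-l\) belongs to \(\adh(l)\).
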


\begin{figure}[hbt]
    \centering
\includegraphics[scale=0.75]{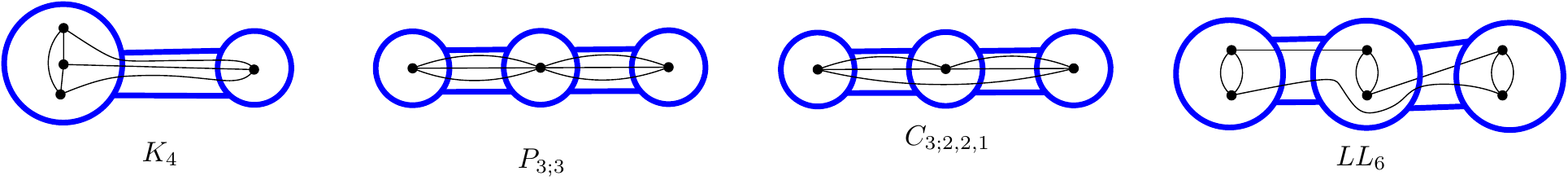}
    \caption{Tree-cut decompositions of width \(3\)}
    \label{figure:four_forbidden_tcd}
\end{figure}

\begin{example}\label{example:scw_3}
Figure \ref{figure:four_forbidden_tcd} illustrates a tree-cut decomposition for each of four graphs; the width of each decomposition is \(3\), so every graph \(G\) in the figure satisfies \(\scw(G)\leq 3\).  From Example \ref{example:sn_3}, we already had that each graph satisfied \(\sn(G)\geq 3\), which combined with Theorem \ref{theorem:sn_scw} gives us that \(\sn(G)=\scw(G)=3\) for all four graphs \(G\).

\begin{figure}[hbt]
    \centering
\includegraphics[scale=0.75]{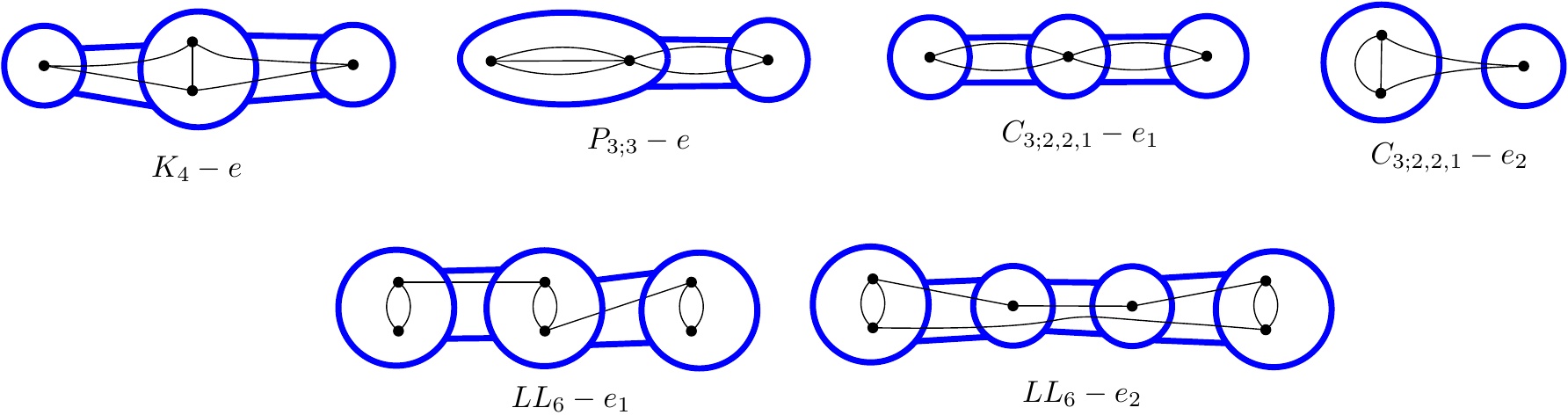}
    \caption{Tree-cut decompositions of width \(2\)}
\label{figure:four_forbidden_edges_deleted}
\end{figure}

In fact, we can say even more about these graphs: they are minimal, with respect to the topological minor relation, among all graphs of scramble number \(3\) or more. Since none has a degree \(2\) vertex, it suffices to show that deleting any edge will lower the scramble number.  Illustrated in Figure \ref{figure:four_forbidden_edges_deleted} are tree-cut decompositions of all subgraphs, up to symmetry, of these four graphs obtained by deleting a single edge.  Since each decomposition has width \(2\), each of these graphs \(H\) satisfies \(\sn(H)\leq \scw(H)\leq 2<3\), as desired.

\end{example}

\section{Characterizing Graphs of Scramble number Two}\label{section:sn2}


We say a graph \(G\) is \emph{scramble minimal} if for any proper topological minor $H \preceq G$, $\sn(H)< \sn(G)$. We remark that since smoothing a vertex does not change scramble number, a graph is scramble minimal if and only if any degree \(2\) node is incident to two parallel edges (preventing a smoothing), and for every edge \(e\in E(G)\) we have \(\sn(G-e)<\sn(G)\).
A graph \(G\) is \emph{\(k\)-scramble minimal} if \(\sn(G)\geq k\) and any proper topological minor \(H\preceq G\) has \(\sn(H)<k\).

\begin{lemma}\label{lemlambda}
If $G$ is $k$-scramble minimal with $k$ or more vertices, then $\lambda(G) \leq k$. 
\end{lemma}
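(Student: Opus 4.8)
The plan is to argue by contradiction, assuming $\lambda(G) \geq k+1$, and to produce from $G$ a \emph{proper} topological minor whose scramble number is still at least $k$, contradicting $k$-scramble minimality. The natural candidate is $G-e$ for a single edge $e$: deleting one edge is the gentlest way to pass to a proper subgraph (and hence a proper topological minor), so it is the move least likely to destroy high connectivity. Note first that $\lambda(G)\geq k+1\geq 2$ forces $G$ to contain edges, so such an $e$ exists.

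The key structural observation is that deleting a single edge lowers edge connectivity by at most one. Indeed, if $F$ is a minimum edge cut of $G-e$, then $F\cup\{e\}$ disconnects $G$, so $\lambda(G)\leq |F|+1=\lambda(G-e)+1$, giving $\lambda(G-e)\geq \lambda(G)-1\geq k$. With this in hand I would apply Lemma \ref{edgeconnect} to $G-e$, using that $|V(G-e)|=|V(G)|\geq k$ by hypothesis, to conclude
$$\sn(G-e)\geq \min\{\lambda(G-e),\,|V(G-e)|\}\geq \min\{k,k\}=k.$$
But $G-e$ is a proper topological minor of $G$, so the definition of $k$-scramble minimality forces $\sn(G-e)<k$, a contradiction. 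Therefore $\lambda(G)\leq k$.

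The whole argument reduces to two small points, and these are where I expect whatever care is needed. The first is the edge-connectivity inequality $\lambda(G-e)\geq\lambda(G)-1$, which is elementary but must be stated cleanly. The second, and the genuine reason the hypothesis is present, is the explicit use of $|V(G)|\geq k$: this is precisely what guarantees that the $|V(G-e)|$ term in Lemma \ref{edgeconnect} does not undercut the bound $k$. Without that assumption a graph on fewer than $k$ vertices could have small scramble number no matter how highly connected it is, and the contradiction would evaporate. Beyond these two observations there is no combinatorial subtlety, so I expect the proof to be short, bypassing Corollary \ref{corollary:top_minor} entirely and invoking the definition of $k$-scramble minimality directly.
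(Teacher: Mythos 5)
Your proposal is correct and takes essentially the same route as the paper: both proofs suppose $\lambda(G)>k$, delete a single edge $e$, note that the resulting graph still has edge connectivity at least $k$, and apply Lemma \ref{edgeconnect} together with the hypothesis $|V(G)|\geq k$ to conclude $\sn(G-e)\geq k$, contradicting $k$-scramble minimality. Your only addition is spelling out the elementary inequality $\lambda(G-e)\geq\lambda(G)-1$, which the paper uses implicitly.
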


\begin{proof}
 Let $G$ be a $k$-scramble minimal graph, and suppose for the sake of contradiction that $\lambda(G) >k$. From the definition of $k$-scramble minimal we have that $\sn(G)=k$. Now delete any edge $e\in E(G)$ to create $G'$.  The graph $G'$ has $\lambda(G') \geq k$, so any egg-cut of $G'$ has size $k$ or greater.  From Lemma \ref{edgeconnect} we have that $\sn(G)\geq \min(\lambda(G),|V(G)|)$, so it follows that $\sn(G') \geq \min(k,k)$.  Thus, $\sn(G')\geq k$ which contradicts the $k$-scramble minimality of $G$.  Therefore, if $G$ is $k$-scramble minimal with $k$ or more vertices, $\lambda(G) \leq k$.
\end{proof}

Frequently in upcoming proofs it will be helpful to consider what happens to a scramble when a vertex or edge is deleted from a graph.  Because scrambles are defined to be specific to a graph, there is a priori no way of ``transferring'' a scramble to a subgraph.  To this end, for a scramble $\s$ on a graph $G$ with subgraph $H$, we construct $\s|_{H}$ on \(H\), which we refer to as \emph{the restriction of the scramble $\s$ to $H$}. For each egg $E \in \s$, if \(E\cap V(H)\) forms a nonempty connected subset in $H$, we let $E\cap V(H) \in \s|_{H}$.   Using this definition, we can extract the following result.

\begin{proposition}\label{ugh}
Let $\s$ be a scramble of order at least \(2\) on a graph $G$, let $e \in E(G)$ be an edge such that $H=G-e$ is connected, and let $\s'=\s|_{H}$.  Then, $||\s'|| \geq ||\s||-1$.

\begin{proof}
   Suppose for the sake of contradiction that this result does not hold. 
 Note that \(\s\) must contain at least one egg; if deleting \(e\) had disconnected every egg in \(\s\), then \(||\s||\leq h(\s)=1\). Now, we claim that $e(\s')\geq e(\s)-1$. If all eggs in \(\s'\) overlap, then $e(\s')=\infty$ and the claim holds.  Otherwise, choose an egg-cut \(T\) of size \(e(\s')\) for \(\s'\) on \(H\). 
 Then \(T\cup\{e\}\) is an egg-cut for \(\s\) on \(G\), separating the same pair of eggs as \(T\) did. Thus we have $e(\s')\geq e(\s)-1$.
  
  So, to have $||\s'|| < ||\s||-1$, we must have $h(\s')\leq h(\s)-2$. Let $S'$ be a minimum hitting set of $\s'$.  Letting \(u\) denote an endpoint of \(e\), note that $S'\cup \{u\}$ forms a hitting set of $\s$, since any egg of $\s$ that is not hit by $S'$ must have contained the edge $e$, and thus can be hit by the vertex $u$.  Therefore, we have found a hitting set of $\s$ of size  $h(\s')+1$, which contradicts $h(\s')\leq h(\s)-2$.  Hence it must be the case that $||\s'||\geq ||\s||-1$.
\end{proof}
\end{proposition}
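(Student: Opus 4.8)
The plan is to bound the hitting number $h(\s')$ and the egg-cut number $e(\s')$ separately from below, each dropping by at most one relative to $\s$, and then to recombine these bounds through the minimum defining the order. The observation to set up first is that deleting an edge does not change the vertex set, so $V(H)=V(G)$ and consequently $E\cap V(H)=E$ for every egg $E$ of $\s$. Thus $\s'$ is exactly the subcollection of eggs of $\s$ that remain connected in $H$, and an egg $E$ can fail to survive only when $e$ is a cut edge of the induced subgraph $G[E]$, which forces \emph{both} endpoints of $e$ to lie in $E$.

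Before bounding anything I would check that $\s'$ is a legitimate (nonempty) scramble. If every egg of $\s$ were destroyed by deleting $e$, then every egg would contain a fixed endpoint $u$ of $e$, so the single vertex $u$ would hit all of them, giving $h(\s)=1$ and hence $||\s||\leq 1$, contrary to the hypothesis $||\s||\geq 2$. So at least one egg survives.

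Next I would prove the two inequalities. For $e(\s')\geq e(\s)-1$: if the surviving eggs pairwise intersect then no egg-cut exists and $e(\s')=\infty$, so the bound is vacuous; otherwise I take a minimum egg-cut $T$ for $\s'$ on $H$ and note that $T\cup\{e\}$ separates $G$ into the same two sides, each still housing one of the separated eggs, so $T\cup\{e\}$ is an egg-cut for $\s$ and $e(\s)\leq e(\s')+1$. For $h(\s')\geq h(\s)-1$: given a minimum hitting set $S'$ for $\s'$, the only eggs of $\s$ that $S'$ might miss are the destroyed ones, and since these all contain the endpoint $u$, the set $S'\cup\{u\}$ hits every egg of $\s$, so $h(\s)\leq h(\s')+1$.

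Combining, $||\s'||=\min\{h(\s'),e(\s')\}\geq\min\{h(\s)-1,e(\s)-1\}=||\s||-1$. The crucial point---and the step I would guard most carefully---is the hitting-number bound: it relies on the fact that \emph{all} eggs broken by removing $e$ share the common vertex $u$, which is precisely what allows a single extra vertex to repair the hitting set. The egg-cut bound is comparatively routine once the degenerate all-intersecting case is isolated.
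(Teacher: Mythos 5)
Your proposal is correct and takes essentially the same approach as the paper: the two key constructions are identical, namely extending a minimum egg-cut $T$ for $\s'$ to the egg-cut $T\cup\{e\}$ for $\s$, and extending a minimum hitting set $S'$ for $\s'$ to the hitting set $S'\cup\{u\}$ for $\s$ via the observation that every egg destroyed by deleting $e$ contains the endpoint $u$ (which also gives the same nonemptiness check). The only difference is presentational: you prove both inequalities $e(\s')\geq e(\s)-1$ and $h(\s')\geq h(\s)-1$ and conclude directly from $\min\{h(\s'),e(\s')\}\geq ||\s||-1$, whereas the paper frames the hitting-number step as a contradiction.
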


\begin{corollary}
If $H=G-e$ for some non-bridge edge $e \in E(G)$, then $$\sn(G)-1 \leq \sn(H) \leq \sn(G).$$
\end{corollary}

\begin{proof}
Since $H$ is a subgraph of \(G\), we have by Lemma \ref{lemma:subgraph} that $\sn(H) \leq \sn(G)$.   Let $\s$ be a scramble on $G$ with $||\s||=\sn(G)$, and let $\s'=\s|_{H}$.  By Proposition \ref{ugh}, we have $||\s'||\geq ||\s||-1$, and so $\sn(H)\geq \sn(G)-1 $, as desired. 
\end{proof}

\begin{corollary}\label{corollary:minimal_equality}
Let \(k\geq 2\). If \(G\) is \(k\)-scramble minimal, then \(\sn(G)=k\).
\end{corollary}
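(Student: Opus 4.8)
The plan is to prove the nontrivial direction $\sn(G)\le k$, since $\sn(G)\ge k$ is immediate from the definition of $k$-scramble minimality. The engine of the argument will be the preceding corollary, which guarantees that deleting a \emph{non-bridge} edge $e$ moves the scramble number by at most one; in particular it gives $\sn(G)\le \sn(G-e)+1$. Combining this with the defining minimality property of $G$ applied to the proper topological minor $G-e$ should pin $\sn(G)$ down to exactly $k$.

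First I would produce a non-bridge edge to which the corollary can be applied. Here the hypothesis $k\ge 2$ is essential: since $G$ is $k$-scramble minimal we have $\sn(G)\ge k\ge 2 > 1$, so Lemma \ref{TREE} tells us $G$ is not a tree. Hence $G$ contains a cycle, and any edge $e$ lying on that cycle is a non-bridge edge, because deleting it leaves its two endpoints joined by the remaining arc of the cycle and so keeps $G-e$ connected.

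Next I would chain the two inequalities. Applying the preceding corollary to this non-bridge edge $e$ yields $\sn(G)-1\le \sn(G-e)$. On the other hand $G-e$ is a proper subgraph of $G$ (it has strictly fewer edges, so it is not isomorphic to $G$), hence a proper topological minor; the defining property of $k$-scramble minimality then forces $\sn(G-e)<k$, i.e.\ $\sn(G-e)\le k-1$. Putting these together gives $\sn(G)\le \sn(G-e)+1\le k$, and with $\sn(G)\ge k$ we conclude $\sn(G)=k$.

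I expect the only delicate point to be the existence step, which is exactly why the statement restricts to $k\ge 2$: the preceding corollary is only valid for non-bridge edges, and it is precisely the bound $\sn(G)\ge k\ge 2$ that rules out $G$ being a tree and thereby supplies a cycle, hence a non-bridge edge. Once that edge is in hand the rest is a routine chaining of the inequalities, so I anticipate no further obstacle.
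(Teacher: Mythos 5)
Your proof is correct and follows essentially the same route as the paper: both use the preceding corollary on edge deletion together with the bound $\sn(G)\geq k\geq 2$ to produce a non-bridge edge $e$ and then chain $\sn(G)-1\leq\sn(G-e)\leq k-1$ against $\sn(G)\geq k$. Your explicit justification of the non-bridge edge via Lemma \ref{TREE} and a cycle merely spells out what the paper leaves implicit.
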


\begin{proof}
By definition, if \(G\) is \(k\)-scramble minimal, \(\sn(G)\geq k\) and for any proper topological minor \(H\) we have \(\sn(H)<k\).  Since \(\sn(G)\geq 2\), \(G\) contains some edge such that \(H=G-e\) is connected.  By the previous corollary, \(\sn(G)-1\leq \sn(H)\leq k-1\).  Combined with \(\sn(G)\geq k\), it follows that \(\sn(G)=k\).
\end{proof}

We now have all of the necessary tools to characterize all graphs of scramble number two with our list of four forbidden topological minors. 

\begin{proof}[Proof of Theorem \ref{theorem:main_theorem}]
We know by Example \ref{example:scw_3} that all four graphs from Figure \ref{figure:four_forbidden} have scramble number \(3\).  By Corollary \ref{corollary:top_minor}, we therefore have that any graph with one of them as a topological minor has scramble number at least \(3\).  Contrapositively, a graph of scramble at most \(2\) has none of the four graphs as a topological minor.

 

Let \(G\) be a graph of scramble number at least \(3\); we must show it has one of the four graphs as a topological minor.
Without loss of generality, we will assume \(G\) is \(3\)-scramble minimal; if it is not, then delete edges and vertices and smooth over degree \(2\) vertices until it is.  By Corollary \ref{corollary:minimal_equality}, \(\sn(G)=3\).  Note that $G$ must have at least three vertices as $\sn(G)= 3$.  
We have that $1 \leq \lambda(G) \leq 3$ from Lemma \ref{lemlambda}. However, if $\lambda(G)=1$, then $G$ has a bridge.  From Lemma \ref{bridge}, $G$ has a subgraph with the same scramble number, a contradiction to the minimality of \(G\). Thus, $ \lambda(G)> 1$.  If \(\lambda(G)=3\), then since \(|V(G)|\geq 3\) we may apply Corollary \ref{corollary:3_edge_top_minors} to conclude that \(G\) has one of \(K_4\), \(P_{3;3}\), or \(C_{3; 2,2,1}\) as a topological minor (indeed, \(G\) must equal one of these graphs, since \(G\) is \(3\)-scramble minimal).

It remains to handle the case of $\lambda(G)=2$.  If $\lambda(G)=2$, we know that there exist two edges $e_1,e_2 \in E(G)$ that form an edge-cut of $G$.  We will refer to the connected graphs obtained by deleting these edges as \(G_1\) and \(G_2\), with labels as illustrated in Figure \ref{figure:lambda2_part1}.  Note that a priori, it is possible that \(v_1=v_2\), and that \(v_3=v_4\).

\begin{figure}[hbt]
   \centering
    \includegraphics[scale=1]{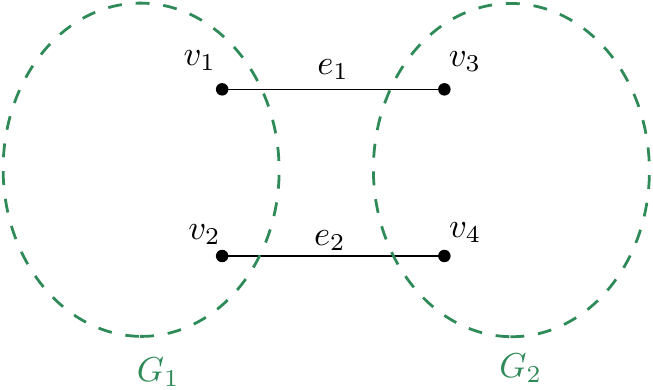}
    \caption{The structure of \(G\) with \(\lambda(G)=2\)}
    \label{figure:lambda2_part1}
\end{figure}

Let $\s$ be a scramble on $G$ with $||\s||=3$. As $\{e_1,e_2\}$ forms a $2$-edge cut of $G$, $\s$ cannot have one egg completely contained in $G_1$ and one egg completely contained in $G_2$.  Therefore, one subgraph (without loss of generality, \(G_1\)) does not completely contain any egg.  We now argue that \(v_1\neq v_2\)  Suppose for the sake of contradiction that  $v_1=v_2$. Let \(H\) be obtained from \(G\) by deleting all vertices in \(G_1\) besides \(v_1\). Then consider the scramble \(\mathcal{S}|_\mathcal{H}\) on \(H\).    Since no egg in \(\mathcal{S}\) is completely contained within $G_1$, and as all paths from $G_1$ to the rest of $G$ travel through $v_1$, the hitting number and egg-cut number remain unchanged.  Since \(G\) is minimal, it follows that \(G=H\).  Then, $v_1$ is a degree \(2\) vertex, which either contradicts the scramble minimality of $G$ (if \(v_1\) can be smoothed), or implies that \(v_3=v_4\), so that \(e_1\) and \(e_2\) are parallel edges connecting \(G_2\) to an isolated vertex \(v_2\).  In this case \(\mathcal{S}|_{G_2}\) has the same order as \(\mathcal{S}\), and we still have a contradiction to \(G\) being minimal. Thus we know that $v_1 \neq v_2$.

There must exist at least one path between $v_1 $ and $v_2$ within \(G_1\); otherwise $\lambda(G)=1$, with \(\{e_1\}\) forming an edge-cut.  If any two paths in \(G_1\) between $v_1$ and $v_2$ share an edge, we can use some shared edge $e$ and the fact that \(\{e,e_1\}\) forms an edge-cut to redefine our subgraphs $G_1$ and $G_2$ so that $G_1$ is smaller, and repeat with each remaining common edge. As we are losing vertices from $G_1$ this process must terminate.  Then, we have either multiple edge-disjoint path between the vertices of $G_1$; or our two edges forming an edge-cut come to one vertex in $G_1$, which would lead to a contradiction as before.  
Thus, we know there must be two edge disjoint paths between $v_1$ and $v_2$.  This gives us the graph illustrated in Figure \ref{figure:lambda2_part2} as a topological minor of \(G\).

\begin{figure}[hbt]
   \centering
    \includegraphics[scale=1]{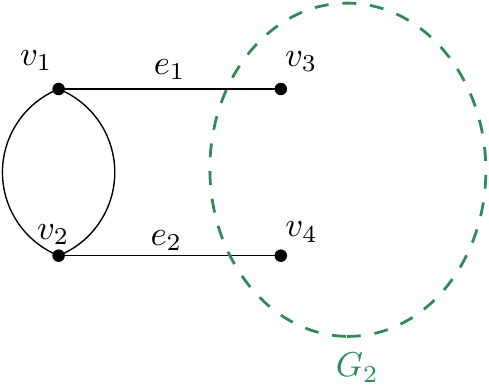}
    \caption{A topological minor of \(G\), which ends up equalling \(G\)}
    \label{figure:lambda2_part2}
\end{figure}

We claim that the graph \(H\) in  Figure \ref{figure:lambda2_part2} has scramble number equal to $3$ and therefore must actually be $G$ by the scramble minimality of \(G\).
Consider \(\mathcal{S}|_{H}\), the scramble obtained by intersecting the eggs of \(\mathcal{S}\) with \(V(H)\).  Eggs remain connected, as any egg whose connectivity relied on edges in \(G_1\) must contain both \(v_1\) and \(v_2\), which are connected in \(H\).  Any hitting set for \(\mathcal{S}|_{H}\) is also a hitting set for \(\mathcal{S}\), since every egg of \(\mathcal{S}\) intersecting \(G_2\) must contain \(v_1\) or \(v_2\); thus \(h(\mathcal{S}|_H)\geq h(\mathcal{S})\).  Now let \(E_1,E_2\in\mathcal{S}\), with \(E_i'=E_i\cap V(H)\in \mathcal{S}|_H\).  Suppose for the sake of contradiction that we may disconnect \(H\) by deleting a set \(T\) fewer than \(3\) edges so that \(E_1'\) is in one component and \(E_2'\) is in the other. Since \(\lambda(G)=2\), we know \(\lambda(H)=2\) as well, so \(|T|=2\).  Note that deleting the two edges connecting \(v_1\) and \(v_2\) does not disconnect the graph; nor does deleting one of those edges and any other edge \(e\) (otherwise \(\{e\}\)  would be an edge-cut of size \(1\)).  Thus \(T\subset E(G_2)\cup \{e_1,e_2\}\).  By the structure of our graphs, it follows that \(T\) is also an egg-cut for \(E_1\) and \(E_2\), contradicting \(e(\mathcal{S})\geq3 \).  Thus \(e(\mathcal{S}|_{H})\geq 3\), and we have \(||\mathcal{S}|_{H}||\geq 3\).  It follows that \(3\leq \sn(H)\leq \sn(G)=3\), so \(\sn(H)=3\).  As \(G\) is \(3\)-scramble minimal, we have \(G=H\).

Now, since $G$ is $3$-scramble minimal, deleting any edge decreases the scramble number of $G$ to two.  Consider $G'$, the subgraph of $G$ obtained by deleting one of the edges connecting \(v_1\) and \(v_2\).  No eggs of \(\mathcal{S}\)  are disconnected by this, so \(\mathcal{S}|_{G'}\) consists of the same eggs as \(\mathcal{S}\).  Since this scramble has the same hitting number, we know that  $\mathcal{S}|_{G'}$ must have an egg-cut of size at most two, since \(\sn(G')\leq 2\).  The graph $G$ does not have an egg-cut of size two, so the deleted edge must have been part of a minimal egg-cut of size $3$ in $G$; it follows that the other edge from \(v_1\) to \(v_2\) is part of the egg-cut in \(G'\), so in fact the egg-cut must have size exactly \(2\).  The remaining parallel edge is part of this cut, as well as some  edge in $G_2$, which must form a bridge of $G_2$.  Let this third egg-cut edge in $G_2$ be named $e_3$.  Now we can refine $G_2$ into two subgraphs, $G_A$ as the subgraph between $e_1$ and $e_3$ and $G_B$ as the subgraph between $e_2$ and $e_3$.  This structure is illustrated in Figure \ref{figure:lambda2_part3}.

\begin{figure}[hbt]
   \centering
    \includegraphics[scale=1]{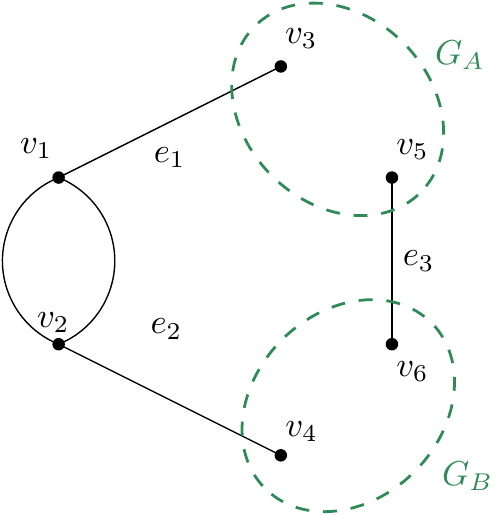}
    \caption{Further structure of \(G\)}
    \label{figure:lambda2_part3}
\end{figure}

Since \(\{v_1v_2,v_1v_2,e_3\}\) is an egg-cut, say separating the eggs \(E_1\) and \(E_2\), we claim without loss of generality that \(E_1\) contains the edge $e_1$ and \(E_2\) contains the  edge $e_2$.  If this were not the case, the parallel edges in $G_1$ could be replaced with one of $e_1$ or $e_2$, creating an egg-cut of size two for \(\mathcal{S}\), which is impossible. The eggs \(E_1\) and \(E_2\) do not intersect, so we know \(V(E_1) \subset V(G_A)\cup \{v_1\}\) and \(E_2 \subset V(G_B)\cup \{v_2\}\).  

We now remark that any no egg in \(\s\) can be contained in \(V(G_A)\), as \(\{e_1,e_3\}\) would form an egg cut separating it from \(E_2\); and similarly no egg can be contained in \(G_B\).  We have already assumed that no egg can be contained in \(V(G_1)=\{v_1,v_2\}\).  Thus every egg contains at least one of \(e_1\), \(e_2\), and \(e_3\).  We note that some egg contains \(e_3\) without intersecting \(e_1\) or \(e_2\): otherwise \(\{v_3,v_4\}\) would be a hitting set of size \(2\). It follows that \(v_3\neq v_5\), and \(v_4\neq v_6\).  We now claim that there must exist two distinct (but not necessarily edge-disjoint) paths between \(v_3\) and \(v_5\) in \(G_A\).  Suppose not, so that there exists a unique path connecting them.  If some egg \(F_1\) containing \(e_1\) and some egg \(F_3\) containing \(e_3\) don't intersect, then \(e_2\) together with some edge on that path forms an egg-cut of size \(2\).  If every pair of such eggs intersects, then they must all intersect at a common vertex \(v\) of that path.  Since every other egg must contain the edge \(e_2\), we have that \(\{v,v_2\}\) is a hitting set of size \(2\), a contradiction.  Thus there are two distinct paths from \(v_3\) to \(v_5\) in \(G_A\), and by symmetry two distinct paths from \(v_4\) to \(v_6\) in \(G_B\).  This leads to \(G\) containing \(LL_6\) as a topological minor, illustrated below.  (In fact, since \(\sn(LL_6)=3\) and \(G\) is \(3\)-scramble minimal, we can conclude that \(G=LL_6\).)


\begin{figure}[hbt]
   \centering
    \includegraphics[scale=1]{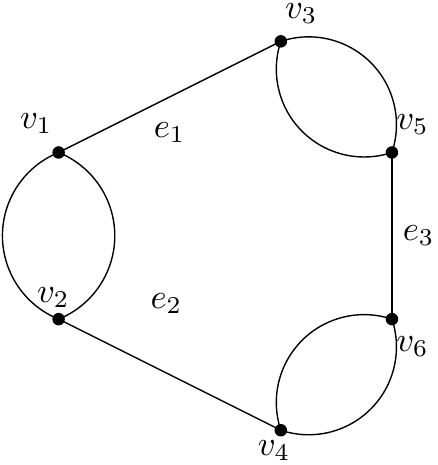}
    \caption{A topological minor of \(G\), which ends up equalling \(G\)}
    \label{figure:lambda2_part4}
\end{figure}

Thus in all cases, our \(3\)-scramble minimal graph \(G\) contains (in fact, is equal to) one of the four claimed topological minors.  This completes the proof.
\end{proof}


\section{Scrambles of Higher Orders}
\label{section:higher_orders}

In this section we build families of graphs to prove that no finite forbidden topological minor characterization exists for \(\mathscr{S}_m\), the set of graphs of scramble number at most \(m\), when \(m\geq 3\). Our first lemma gives us a family of scramble minimal graphs with even scramble number.

\begin{lemma}\label{lemma:minimal_even} Let \(k\geq 2\) and \(n\geq 2k \), and let \(C_{n;k}\) denote the cycle graph on \(n\) vertices, where each edge has \(k\) parallel copies.  Then \(C_{n;k}\) is \(2k\)-scramble minimal.
\end{lemma}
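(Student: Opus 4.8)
The plan is to verify the two defining conditions of \(2k\)-scramble minimality separately: first that \(\sn(C_{n;k})\geq 2k\), and then that every proper topological minor of \(C_{n;k}\) has scramble number at most \(2k-1\). For the lower bound I would compute the edge connectivity. Each vertex of \(C_{n;k}\) has degree \(2k\), so isolating a single vertex gives \(\lambda(C_{n;k})\leq 2k\); conversely, since the underlying cycle \(C_n\) is \(2\)-edge-connected, any bipartition of the vertices is crossed in at least two positions of the cycle, each contributing \(k\) parallel edges, so \(\lambda(C_{n;k})\geq 2k\). As \(|V(C_{n;k})|=n\geq 2k\), Lemma \ref{edgeconnect} then gives \(\sn(C_{n;k})\geq \min\{\lambda(C_{n;k}),n\}=2k\). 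This is the only place the hypothesis \(n\geq 2k\) is used.

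For minimality, the key preliminary observation is that every vertex has degree \(2k\geq 4\), so no vertex of \(C_{n;k}\) can be smoothed. Hence the first operation producing any proper topological minor \(H\) from \(C_{n;k}\) must be an edge or vertex deletion, so \(H\preceq C_{n;k}-e\) for some edge \(e\) or \(H\preceq C_{n;k}-v\) for some vertex \(v\). Since \(C_{n;k}-v\preceq C_{n;k}-e\) whenever \(e\) is incident to \(v\) (delete the remaining edges at \(v\) and then the isolated vertex), monotonicity of scramble number under topological minors (Corollary \ref{corollary:top_minor}) reduces the whole problem to showing \(\sn(C_{n;k}-e)\leq 2k-1\) for a single edge \(e\). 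Relabeling the cyclically-ordered vertices \(v_1,\dots,v_n\), I may assume \(e\) lies in the bundle joining \(v_n\) and \(v_1\), so that this bundle retains only \(k-1\) parallel edges.

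The crux is an upper bound on \(\scw(C_{n;k}-e)\) via Theorem \ref{theorem:sn_scw}. I would use the tree-cut decomposition whose tree \(T\) is the path \(b_1b_2\cdots b_n\) with \(X_{b_j}=\{v_j\}\). Drawing \(C_{n;k}-e\) in a thickened copy of \(T\), each bundle between consecutive \(v_j,v_{j+1}\) crosses only the link \(b_jb_{j+1}\), while the \(k-1\) surviving edges of the \(v_nv_1\) bundle wrap all the way across, crossing every link and passing through every internal node. Thus each link has adhesion \(k+(k-1)=2k-1\); each internal node \(b_j\) has adhesion \(k-1\) and so contributes \((k-1)+|X_{b_j}|=k\); and each leaf contributes \(1\). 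Since \(k\geq 2\) gives \(2k-1\geq k\), the width is exactly \(2k-1\), whence \(\sn(C_{n;k}-e)\leq \scw(C_{n;k}-e)\leq 2k-1<2k\). Combining this with the lower bound shows \(C_{n;k}\) is \(2k\)-scramble minimal.

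I expect the screewidth computation to be the part requiring the most care: one must correctly account for the wrapped-around edges of the weakened bundle, which contribute to every link adhesion and to every internal node adhesion, and verify that deleting a single edge lowers the maximum link adhesion from its undeleted value of \(2k\) down to \(2k-1\) without any node term exceeding it. The reduction to a single edge deletion and the edge-connectivity lower bound are comparatively routine.
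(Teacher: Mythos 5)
Your proposal is correct and takes essentially the same approach as the paper: the lower bound is the edge-connectivity bound \(\sn(C_{n;k})\geq\min\{\lambda,|V|\}=2k\) (the paper phrases this via the scramble of singleton eggs, which is the same computation), and minimality uses the identical path tree-cut decomposition with \(X_{b_i}=\{v_i\}\) on \(C_{n;k}-e\), whose links have adhesion \(2k-1\) and whose nodes contribute \(k\), combined with Theorem \ref{theorem:sn_scw}. Your explicit reduction of vertex deletions to edge deletions makes precise a step the paper leaves implicit, but the substance of the argument is unchanged.
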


\begin{proof}
Let \(\mathcal{S}\) denote the scramble whose eggs are the vertices of \(C_{n;k}\).  This scramble has order \(\min\{|V(C_{n;k})|,\lambda(C_{n;k})\}=\min\{n,2k\}=2k\), so \(\sn(C_{n;k})\geq 2k\).  See the left of Figure \ref{figure:even_scramble}.

To see that \(C_{n;k}\) is \(2k\)-scramble minimal, we must show that any proper topological minor of \(C_{n;k}\) has scramble number at most \(2k-1\). Since \(k\geq 2\), there are no degree \(2\) nodes.  Thus it suffices to show that deleting any edge gives a graph with scramble number at most \(2k-1\). Let \(v_1,\ldots,v_n\) denote the vertices of \(C_{n;k}\), ordered cyclically. By the symmetry of \(C_{n;k}\), we may consider \(H=C_{n;k}-e\), where \(e\) is an edge connecting \(v_1\) and \(v_n\).  Construct a tree-cut decomposition \(\mathcal{T}=(T,\mathcal{X})\) of \(H\) with \(T=P_n\), the path graph on \(n\) vertices, with nodes \(b_1,\ldots,b_n\) and \(X_{b_i}=\{v_i\}\) for all \(i\). See the right of Figure \ref{figure:even_scramble} for an example. The adhesion of the \(i^{th}\) link consists of \(2k-1\) edges, namely the \(k-1\) connecting \(v_1\) with \(v_n\) together with the \(k\) edges connecting \(v_i\) and \(v_{i+1}\).  Each node similarly has adhesion size \(k-1\), and the size of each \(X_b\) is \(1\).  Thus \(w(\mathcal{T})=2k-1\), so \(\sn(H)\leq \scw(H)\leq 2k-1\).  We conclude that \(C_{n;k}\) is \(2k\)-scramble minimal.
\end{proof}

\begin{figure}[hbt]
   \centering
    \includegraphics[scale=1]{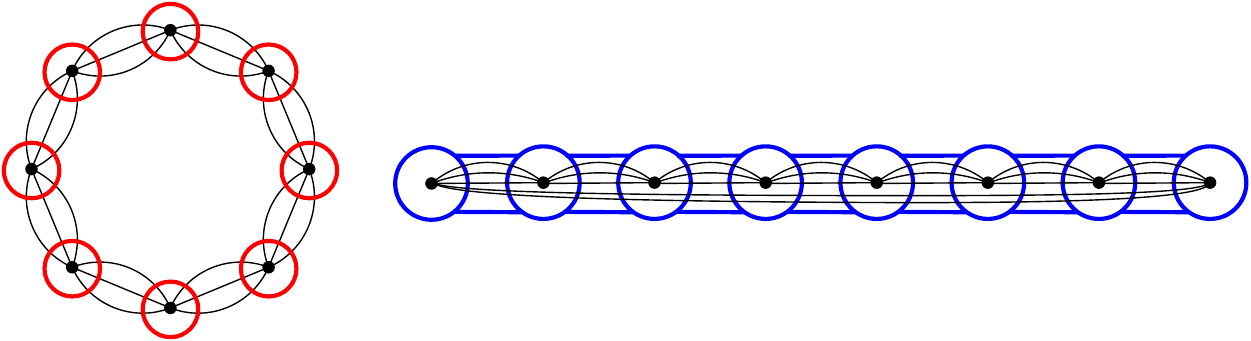}
    \caption{The graph \(C_{8;3}\) with a scramble of order \(6\), and a tree-cut decomposition of \(C_{8;3}-e\) of width \(5\)}
    \label{figure:even_scramble}
\end{figure}

Our next lemma gives us a family of scramble-minimal graphs with odd scramble number.

\begin{lemma}\label{lemma:minimal_odd} Let let \(k\geq 2\) and \(n\geq 3k\), and let \(\tilde{C}_{n;k}\) denote a cycle graph on \(n\) vertices where \(2k\) consecutive edges have \(k+1\) parallel copies (say from \(v_1\) through \(v_{2k+1})\), and all others have \(k\).  Then \(\tilde{C}_{n;k}\) is \((2k+1)\)-scramble minimal.
\end{lemma}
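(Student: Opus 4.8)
The plan is to mirror the structure of the proof of Lemma \ref{lemma:minimal_even}: first exhibit a scramble of order \(2k+1\) to show \(\sn(\tilde{C}_{n;k}) \ge 2k+1\), and then establish minimality by showing that deleting any single edge produces a graph of screewidth (hence scramble number, via Theorem \ref{theorem:sn_scw}) at most \(2k\). Since \(k \ge 2\), every bundle has at least \(k \ge 2\) parallel copies, so \(\tilde{C}_{n;k}\) has no degree-\(2\) vertices; exactly as in the even case, it therefore suffices to show \(\sn(\tilde{C}_{n;k}-e) \le 2k\) for every edge \(e\).

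For the lower bound, the naive scramble whose eggs are all \(n\) vertices does \emph{not} work: its egg-cut number equals \(\lambda(\tilde{C}_{n;k}) = 2k\) (delete two thin bundles), giving order only \(2k\). Instead I would take \(\mathcal{S}\) to be the scramble whose eggs are the \(2k+1\) singletons \(\{v_1\},\dots,\{v_{2k+1}\}\) spanning the thick arc. These eggs are disjoint, so \(h(\mathcal{S}) = 2k+1\). For the egg-cut number, every egg-cut disconnects the cycle by deleting two bundles and must separate two vertices of the block \(\{v_1,\dots,v_{2k+1}\}\); I would check that cutting two thin bundles leaves this whole block on one side (so it is not a valid egg-cut), whereas any valid egg-cut must delete at least one of the thick bundles \(e_1,\dots,e_{2k}\), forcing \(e(\mathcal{S}) \ge (k+1)+k = 2k+1\), a value attained by cutting one thick and one thin bundle. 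Hence \(||\mathcal{S}|| = \min\{2k+1,\,2k+1\} = 2k+1\).

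For minimality I would split into two cases. If \(e\) lies in a thin bundle, the same path tree-cut decomposition as in Lemma \ref{lemma:minimal_even} applies: lay the cycle out as a path broken at the reduced bundle (now \(k-1\) copies), which becomes the wrap crossing every link, so each link carries at most \((k+1)+(k-1) = 2k\) edges and the width is \(2k\). The delicate case, and the main obstacle, is when \(e\) lies in a thick bundle \(e_i\), reducing it to \(k\) copies: now \emph{every} bundle has at least \(k\) copies, so no path decomposition can avoid a link carrying a full thick bundle (\(k+1\)) together with the wrap (\(\ge k\)), which would force width \(2k+1\).

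To handle the thick case I would instead hide all remaining thick bundles inside bags. Put \(C_1 = \{v_1,\dots,v_i\}\) and \(C_2 = \{v_{i+1},\dots,v_{2k+1}\}\); then every still-thick bundle (all of \(e_1,\dots,e_{2k}\) except \(e_i\)) has both endpoints in a single bag and crosses no link, and \(|C_1|,|C_2| \le 2k\). Arrange the remaining interior thin vertices \(v_{2k+2},\dots,v_n\) as a path joining \(C_1\) to \(C_2\), so that each thin bundle crosses exactly one link. The reduced bundle \(e_i\), which joins \(v_i \in C_1\) to \(v_{i+1}\in C_2\) sitting at the two ends of this path, is then the sole return edge crossing every link. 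Consequently each link carries one thin bundle (\(k\)) together with \(e_i\) (\(k\)), for adhesion exactly \(2k\); each bag \(C_j\) is a leaf of the tree with node-adhesion \(0\) and size at most \(2k\); and each interior node has node-adhesion \(k\) (only \(e_i\) passes through) and size \(1\), for total \(k+1 \le 2k\). Thus the width is \(2k\), giving \(\sn(\tilde{C}_{n;k}-e) \le \scw(\tilde{C}_{n;k}-e) \le 2k\). Combining both deletion cases with the lower bound shows that \(\tilde{C}_{n;k}\) is \((2k+1)\)-scramble minimal.
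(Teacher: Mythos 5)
Your proposal is correct and takes essentially the same approach as the paper: a scramble of \(2k+1\) disjoint eggs whose egg-cut number is forced to \(2k+1\) because any valid egg-cut must use a thick bundle, and, for minimality, width-\(2k\) path-shaped tree-cut decompositions via Theorem \ref{theorem:sn_scw}, with the thick arc absorbed into two large bags split at the deleted bundle in the thick-deletion case. Your minor variations --- using the \(2k+1\) singletons \(\{v_1\},\ldots,\{v_{2k+1}\}\) instead of the paper's \(2k\) singletons plus one complementary egg, and placing the two big bags at opposite ends of the path rather than adjacent --- do not change the argument.
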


\begin{proof}
Consider the scramble
\[\mathcal{S}=\{\{v_2\},\ldots,\{v_{2k+1}\},\{v_{2k+2},\ldots,v_{n},v_1\}\}.\] An example of this scramble is illustrated in Figure \ref{figure:odd_scramble}.
Since the eggs are disjoint, we have \(h(\mathcal{S})=2k+1\).  For an egg-cut, at least two collections of parallel edges must be deleted, as this is necessary to disconnect the graph.  However, no two bundles of \(k\) edges can be deleted to disconnect two eggs; while one bundle of  \(k\) edges and another bundle of \(k+1\) can.  Thus \(e(\mathcal{S})=2k+1\), and so \(\sn(\tilde{C}_{n;k})\geq ||\mathcal{S}||=\min(n-k+1,2k+1)=2k+1\).

\begin{figure}[hbt]
   \centering
    \includegraphics[scale=1]{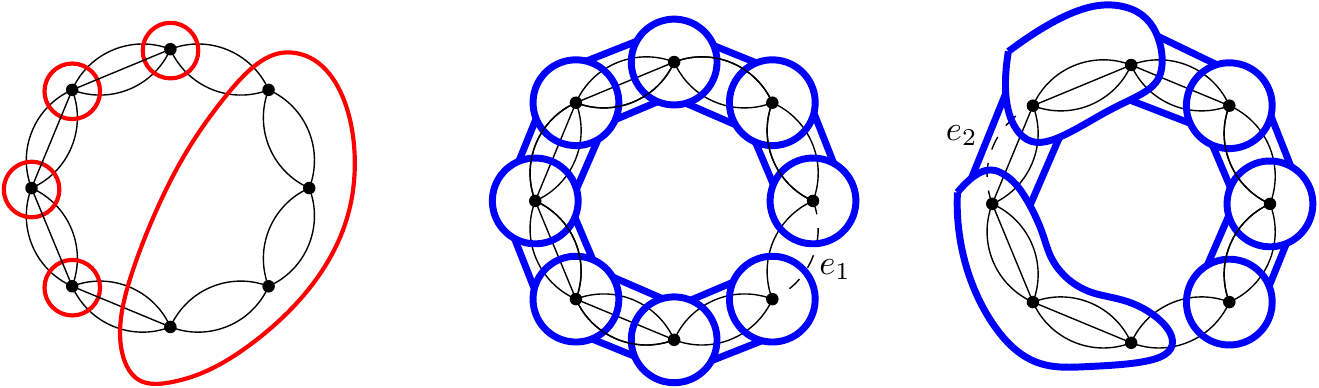}
    \caption{The graph \(\tilde{C}_{8;2}\) with a scramble of order \(5\), and tree-cut decompositions of \(\tilde{C}_{8;2}-e_1\) and \(\tilde{C}_{8;2}-e_2\), both of width \(4\)}
    \label{figure:odd_scramble}
\end{figure}

To see that \(\tilde{C}_{n;k}\) is \((2k+1)\)-scramble minimal, we first note that since \(k\geq 2\), there are no degree \(2\) nodes.  Thus it suffices to show that deleting any edge yields a graph with scramble number at most \(2k\).  First we deal with the case where the deleted edge \(e_1\) is from a bundle of \(k\) edges, say between \(v_i\) and \(v_{i+1}\) where \(2k+2\leq i\leq n\). 
 We construct a tree-cut decomposition similar to that from the proof of Lemma \ref{lemma:minimal_even}, except with the nodes in the path corresponding to \(v_{i+1},v_{i+2},\ldots,v_i\), ordered cyclically.  Then every (non-leaf) node and every link has as part of its adhesion the \(k-1\) edges connecting \(v_{i}\) and \(v_{i+1}\). See the middle of Figure \ref{figure:odd_scramble} for an example.  The nodes have no other adhesion (and correspond to sets of size \(1\)), and every link has either \(k\) or \(k+1\) more edges in its adhesion.  Thus \(\sn(\tilde{C}_{n;k}-e_1)\leq \scw(\tilde{C}_{n;k}-e_1)\leq 2k\).   
 
 Now we handle the case where the deleted edge \(e_2\) came from a bundle of \(k+1\) nodes, say between \(v_i\) and \(v_{i+1}\) where \(1\leq i\leq 2k+1\); by the symmetry of the graph, we may assume \(k+1\leq i\leq 2k+1\).  Construct a tree-cut decomposition with \(T=P_{n-i+1}\) on nodes \(b_1,\ldots,b_{n-2}\) where \(X_{b_1}=\{v_1,\ldots,v_{i}\}\), \(X_{b_2}=\{v_{i+1},\ldots,v_{2k+2}\}\), and \(X_{b_j}=\{v_{2k+j}\}\) for \(3\leq j\leq n-2k\).  See the right of Figure \ref{figure:odd_scramble} for an example. By our choice of \(i\), \(|X_{b_1}|\leq 2k\), and as \(b_i\) is a leaf node it has no adhesion.  Similarly, \(|X_{b_2}|\leq k\), and its adhesion consists of the \(k\) edges connecting \(v_1\) and \(v_n\).  Every other \(b_i\) has \(|X_{b_i}|=1\), with the non-leaves having the same adhesion of \(k\) edges. Finally, every link has size \(k+k=2k\).  Thus \(\sn(\tilde{C}_{n;k}-e_2)\leq \scw(\tilde{C}_{n;k}-e_2)\leq 2k\). 
We conclude that  \(\tilde{C}_{n;k}\) is \((2k+1)\)-scramble minimal.
\end{proof}

We are ready to prove that  \(\mathscr{S}_m\), the set of all connected graphs of scramble number at most \(m\), admits a characterization by a finite list of forbidden topological minors if and only if \(m\leq 2\).

\begin{proof}[Proof of Theorem \ref{theorem:non_result}]
Note that \(\mathscr{S}_1\) is precisely the set of trees.  A connected graph is a tree if and only if it does not contain a cycle as a subgraph if and only if it does not have as a topological minor the graph \(P_{2;2}\), consisting of a pair of vertices connected by a pair of edges.  Thus  \(\mathscr{S}_1\) admits a characterization by a finite list of forbidden topological minors.  We also have such a characterization for \(\mathscr{S}_2\) given by Theorem \ref{theorem:main_theorem}.

Now let \(m\geq 3\).  Any complete list of forbidden topological minors for \(\mathscr{S}_m\) must include all \((m+1)\)-scramble minimal graphs.  By Lemmas \ref{lemma:minimal_even} and \ref{lemma:minimal_odd}, there are infinitely many \((m+1)\)-scramble minimal graphs, namely \(C_{n;(m+1)/2}\) for \(m+1\) even and \(\tilde{C}_{n,\lfloor (m+1)/2\rfloor}\) for \(m+1\) odd, as \(n\) varies above the prescribed minimum.  Thus there can exist no characterization of \(\mathscr{S}_m\) by a finite list of forbidden topological minors.
\end{proof}

Although we have shown that there exists no finite list of forbidden topological minors for \(\mathscr{S}_m\) with \(m\geq 3\), we might still hope for some nice characterization.  For instance, perhaps we could classify all \((m+1)\)-scramble minimal graphs as nicely described infinite families, with a finite collection of special cases.  Another possibility comes from generalizing the notion of a topological minor, an approach we detail here.

Let \(u\) be a vertex in a graph \(G\) adjacent to exactly two other vertices, say \(v\) by \(m\) edges and \(w\) by \(n\) edges.  A \emph{multi-smoothing at \(u\)} removes \(u\) from the graph and adds \(\min\{m,n\}\) edges between \(v\) and \(w\).  We say that a graph \(H\) is a \emph{multi-topological minor} of a graph \(G\) if it can be obtained from \(G\) by deleting vertices and edges and performing multi-smoothings.  For example, the graph \(C_{n;k}\) is a multi-topological minor or \(C_{n+1;k}\), obtained by performing a multi-smoothing at any vertex.

\begin{proposition}
If \(H\) is a multi-topological minor of \(G\), then \(\sn(H)\leq \sn(G)\).
\end{proposition}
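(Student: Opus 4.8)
The plan is to reduce to a single multi-smoothing and then transfer scrambles by hand. Since deleting a vertex or an edge produces a subgraph, Lemma~\ref{lemma:subgraph} guarantees that those operations never raise the scramble number; as $H$ is obtained from $G$ by some sequence of deletions and multi-smoothings, it suffices to show that a single multi-smoothing does not increase scramble number. So let $H$ be obtained from $G$ by a multi-smoothing at a vertex $u$ adjacent to $v$ by $m$ edges and $w$ by $n$ edges, with $m\le n$, so that $H$ replaces $u$ by $m$ new parallel edges between $v$ and $w$. I emphasize that $H$ need not be a topological minor of $G$: an ordinary smoothing can recover only a single $v$-$w$ edge, so Corollary~\ref{corollary:top_minor} does not apply and a genuinely new argument is required.

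First I would transfer an optimal scramble. Let $\s$ be a scramble on $H$ with $||\s||=\sn(H)$. Its eggs are connected subsets of $V(H)=V(G)\setminus\{u\}$, hence subsets of $V(G)$. The only obstruction to an egg $E$ being connected in $G$ is that its connectivity in $H$ used one of the new $v$-$w$ edges, which forces $\{v,w\}\subseteq E$; in that case $G[E\cup\{u\}]$ is connected, since the two-edge path through $u$ replaces every use of a new edge. I therefore define $\phi(E)=E$ when $\{v,w\}\not\subseteq E$ and $\phi(E)=E\cup\{u\}$ otherwise, obtaining a scramble $\s^*=\{\phi(E):E\in\s\}$ on $G$. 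Because $u\notin E$ always, $\phi$ is a bijection onto $\s^*$, and each $\phi(E)$ is a connected subset of $V(G)$.

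It then remains to show $||\s^*||\ge||\s||$, which I would do by bounding the hitting and egg-cut numbers separately. For the hitting number, given a minimum hitting set $S$ of $\s^*$, replacing $u$ by $v$ (if $u\in S$) yields a hitting set of $\s$ of no greater size, since $v$ lies in every egg to which $u$ was added; hence $h(\s^*)\ge h(\s)$. For the egg-cut number, take a minimum egg-cut $C$ of $\s^*$ with its vertex bipartition $A\sqcup B$ of $V(G)$; deleting $u$ gives a bipartition of $V(H)$ whose edge cut $C_H$ separates the same two eggs of $\s$. The cuts $C$ and $C_H$ agree on every edge not incident to $u$, so I only need to compare the new $v$-$w$ edges crossing $C_H$ with the $u$-incident edges crossing $C$. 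This is the crux of the argument: a short case check on the sides of $v$, $w$, and $u$ shows that the number of crossing new edges (either $0$ or $m$) is at most the number of crossing $u$-incident edges, using precisely the inequality $m\le n$; thus $|C_H|\le|C|$ and so $e(\s^*)\ge e(\s)$ (the case where $\s^*$ has no egg-cut is immediate, as $\phi$ only enlarges eggs and so preserves intersections). Combining the two bounds gives $||\s^*||\ge||\s||$, so $\sn(G)\ge||\s^*||\ge\sn(H)$, completing the reduction. The main obstacle is exactly this egg-cut comparison, where one must verify that introducing $\min\{m,n\}$ parallel edges never opens up a cheaper egg-cut than the one previously routed through $u$.
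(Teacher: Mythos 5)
Your proof is correct and follows essentially the same route as the paper's: reduce to a single multi-smoothing, transfer a scramble from $H$ to $G$ by adding $u$ to each egg containing both $v$ and $w$, convert hitting sets by swapping $u$ for $v$, and compare egg-cuts via a case analysis on which side of the bipartition $v$ and $w$ lie, using $\min\{m,n\}\le m,n$. Your write-up is in fact somewhat more careful than the paper's (e.g., in stating the egg-enlargement condition as $\{v,w\}\subseteq E$ and in verifying that the transferred cut separates the same pair of eggs), but no new ideas are involved.
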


\begin{figure}[hbt]
    \centering
    \includegraphics[scale=1]{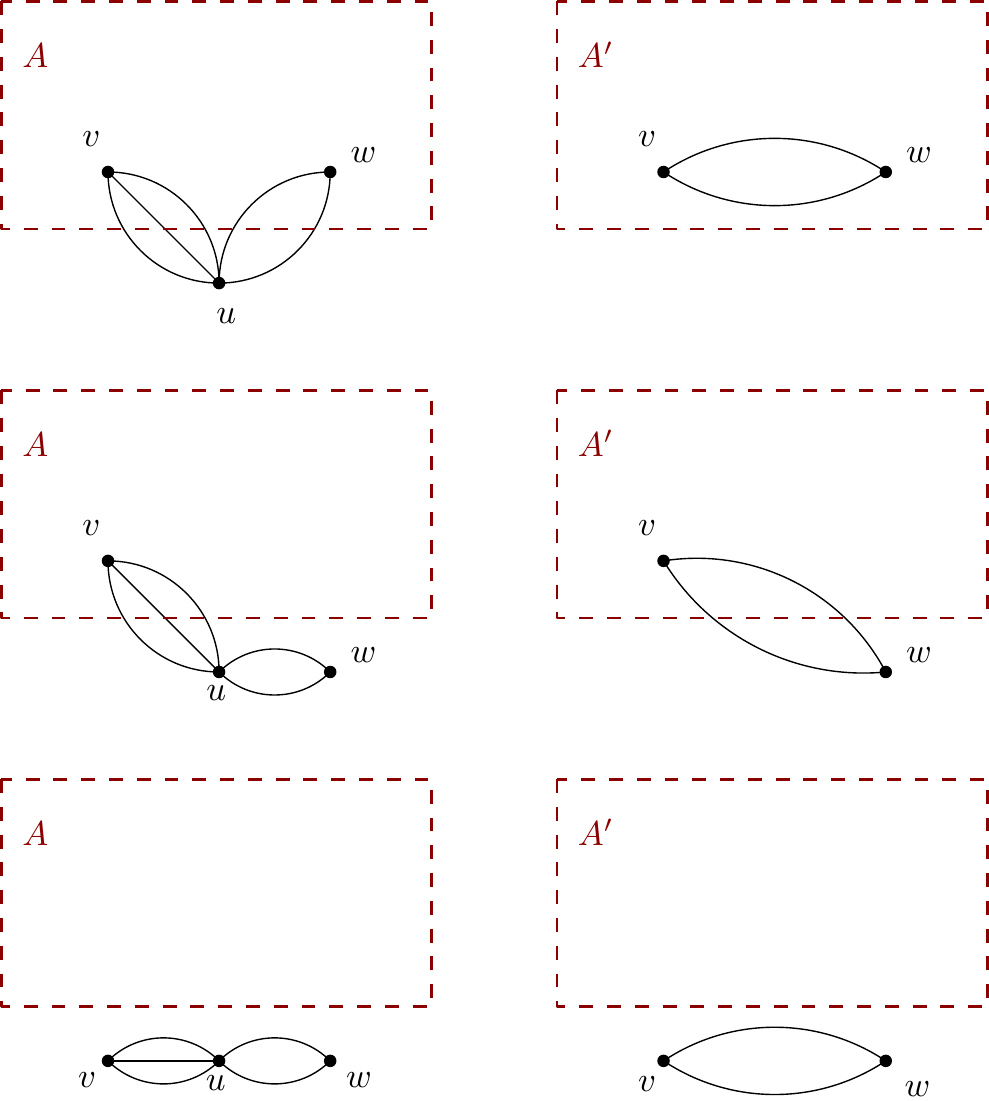}
    \caption{Possible relations of \(u,v,w\) to an egg-cut during a multi-smoothing}
    \label{figure:multi_smoothing_cases}
\end{figure}

\begin{proof}
It suffices to assume that $H$ can be obtained from $G$ by multi-smoothing a single vertex, say $u$ with adjacent vertices $v$ (connected by \(m\) edges) and $w$ (connected by \(m\) edges).  After smoothing $u$, the number of edges between $v$ and $w$ is $\min\{m,n\}$. Let \(e'\) be one such edge.

Now, let $\s'$ be some scramble on $H$.  We will define the scramble $\s$ on $G$ as follows.  For each egg $E_i'\in \s'$, include $E_i \in \s$, where \begin{equation}
E_i = \left\{
    \begin{array}{lr}
        E_i' & \text{if } \, v \notin e'\\
        E_i' \cup \{u\} & \text{if } \, v \in e'.
    \end{array}
\right. 
\end{equation} Consider a minimal hitting set \(S\) for $\s$.  If $u \notin S$, then $S$ must also be a hitting set for $\s'$.  If $u \in H$, then $H'=(H \cup \{v\}) - \{u\}$ is a hitting set for \(\s'\).  It follows that $h(\s')\leq h(\s)$. 

Now consider a set $A \subseteq V(G)$ such that $A$ and $A^c$ both contain eggs of $\s$.  Without loss of generality, $u \notin A$. It follows that $A' \subseteq V(G')$ and $(A')^c$ both contain eggs of $\s'$ in $G'$. Then, we have the three possible situations, illustrated in Figure \ref{figure:multi_smoothing_cases}.  We could have $v,w \in A$, one of $v$ or $w$ in $A$, or both $v,w \notin A$.  We claim that in passing from \(G\) to \(H\), the number of edges in our egg-cut could only decrease.  If \(v,w\in A\), then we lose \(m+n\); if \(v\in A\) and \(w\notin A\) we lose \(m-\min\{m,n\}\) (and similar if \(w\in A\) and \(v\notin A\)); and if \(v,w\notin A\) we lose no edges. Thus for any egg-cut for \(\s\), there is an egg-cut with at most that many edges for \(\s'\), meaning \(e(\s')\leq e(\s)\).

It follows that \(||\s'||\leq ||\s||\), implying that \(\sn(H)\leq \sn(G)\), as desired.
\end{proof}

Since scramble number is multi-topological minor monotone, we could hope for a finite forbidden multi-topological minor characterizationof \(\mathscr{S}_m\) for \(m\geq 3\).  For instance, for \(m+1\) even and \(n\geq 2(m+1)\), the graphs \(C_{n,(m+1)/2}\) all have \(C_{2(m+1),(m+1)/2}\) as a multi-topological minor. Moreover, \(C_{2(m+1),(m+1)/2}\) is minimal among graphs of scramble number \(2(m+1)\) with respect to the multi-topological minor relation: performing any multi-smoothing would decrease the number of vertices to \(2m+1\), bringin the scramble number down.  For \(m+1\) odd, the graph \(\tilde{C}_{2(m+1)+1; \lfloor (m+1)/2\rfloor}\) plays a similar role.  An interesting direction for future work would be to determine if this multi-topological minor relation could lead to finite characterizations of \(\mathscr{S}_m\).

\section{Applications}\label{section:applications}

In this section we present several applications of Theorem \ref{theorem:main_theorem}.

The first result we present is on a variant of scramble number. We say that a scramble \(\s\) on a graph \(G\) is \emph{disjoint} if \(E_i\cap E_j=\emptyset\) for all distinct eggs \(E_i,E_j\in\s\).  The \emph{disjoint scramble number of a graph} $G$, denoted $\dsn(G)$, is the maximum possible order a disjoint scramble on \(G\).
Note that every disjoint scramble on a graph $G$ is also a scramble on a graph $G$, so $\dsn(G) \leq \sn(G)$. 
 In practice, disjoint scrambles are  easier to work with, since the computation of \(h(\mathcal{S})\) is immediate (in particular, it equals the number of eggs).  As we will see in the following example, we may have \(\dsn(G)\neq \sn(G)\).

 \begin{example}\label{example:w5}
 Consider the wheel graph \(W_5\), consisting of a cycle on \(5\) vertices together with a vertex connected to all other vertices (pictured on the left in Figure \ref{figure:wheel_graph_scw}).  We claim that \(\sn(W_5)=4\).  Pictured in the middle of Figure \ref{figure:wheel_graph_scw} is a tree-cut decomposition of \(W_5\) of width \(4\), so \(\sn(G)\leq \scw(G)\leq 4\).  For the lower bound, we consider the \(2\)-uniform scramble \(\mathcal{E}_2\) on \(W_5\), whose eggs are all sets of the form \(\{u,v\}\) with \(u\neq v\) and \(W_5[{u,v}]\) connected (that is, with \(uv\in E(W_5)\)).  By \cite[Theorem 3.1]{uniform_scrambles}, we have
 \[||\mathcal{E}_2||=\min(\lambda_2(W_5),|V(W_5)|-\alpha(W_5)),\]
 where \(\lambda_2(G)\) denotes the smallest number of edges necessary to disconnect a graph \(G\) into two connected components, each with at least \(2\) vertices; and \(\alpha(G)\) denotes the independence number of \(G\), the maximum size of a subset of \(V(G)\) with no two elements adjacent in \(G\).  Inspecting all connected subgraphs of \(W_5\) with \(2\) or \(3\) vertices, we find \(\lambda_2(W_5)=4\); and we find \(\alpha(W_5)=2\).  Thus we have \(\sn(W_5)\geq ||\mathcal{E}_2||=4\), so \(\sn(W_5)=4\).

\begin{figure}[hbt]
    \centering
    \includegraphics[scale=1]{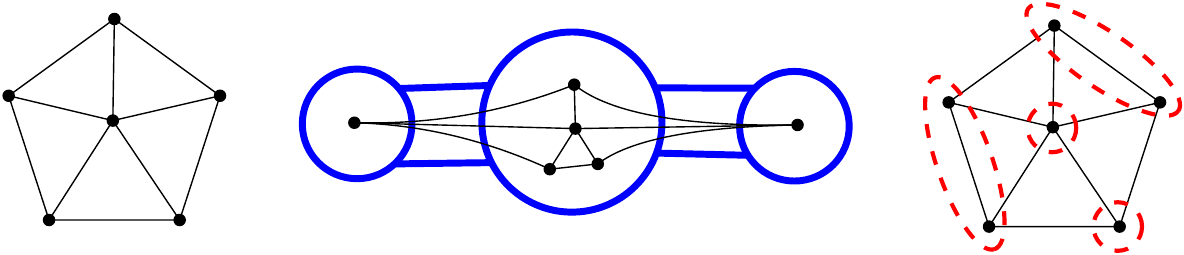}
    \caption{The wheel graph \(W_5\), with a tree-cut decomposition of width \(4\) and a disjoint scramble of order \(3\)}
    \label{figure:wheel_graph_scw}
\end{figure}

 We now claim that \(\dsn(W_5)=3\).  Let \(\mathcal{S}\) be a disjoint scramble on \(W_5\).  If \(\mathcal{S}\) has three eggs or fewer, \(||\mathcal{S}||\leq h(\mathcal{S})\leq 3\).  Otherwise, 
    \(\mathcal{S}\) has at least  four eggs.  Since there are six vertices in \(W_5\), at least two of the eggs must consist of a single vertex, meaning at least one of the eggs consists of a single vertex of degree \(3\).  The set of edges incident to that vertex then forms an egg-cut, so \(||\mathcal{S}||\leq e(\mathcal{S})\leq 3\).  Therefore if \(\mathcal{S}\) is a disjoint scramble, it has order at most \(3\), meaning that \(\dsn(W_5)\leq 3\).  There do indeed disjoint scrambles of order \(3\), such as the one on the right in Figure \ref{figure:wheel_graph_scw}, so \(\dsn(W_5)=3\).
 \end{example}

 In the following proposition we demonstrate precisely when it is possible for scramble number and disjoint scramble number to be distinct.
 
\begin{proposition}
For all graphs $G$ with $\sn(G) \leq 3$, we have $\dsn(G)=\sn(G)$.  However, for \(n\geq 4\), there exists a graph \(G\) with \(n=\sn(G)>\dsn(G)\).
\end{proposition}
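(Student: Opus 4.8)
The plan is to prove the two halves of the proposition separately: first the equality $\dsn(G)=\sn(G)$ whenever $\sn(G)\leq 3$, and then the construction of graphs witnessing a strict gap for every $n\geq 4$. For the first half, I would argue by cases on the value of $\sn(G)\in\{1,2,3\}$, leveraging the structural classifications already established. When $\sn(G)=1$, by Lemma \ref{TREE} the graph $G$ is a tree, and a single egg consisting of all of $V(G)$ is a disjoint scramble of order $1$, so $\dsn(G)=1$. When $\sn(G)=2$, Theorem \ref{theorem:main_theorem} tells us $G$ has none of the four forbidden topological minors; I would exhibit a disjoint scramble of order $2$ (for instance, one arising from a cycle or a pair of eggs separated by a $2$-edge-cut). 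The crux is the case $\sn(G)=3$: here I would take a scramble $\s$ of order $3$ and argue that it can be replaced by a \emph{disjoint} scramble of the same order. The natural approach is to show that any order-$3$ scramble can be ``disjointified'' without lowering the order, perhaps by using the structure of the witnessing scramble (e.g.\ shrinking eggs to single vertices along a suitable hitting/cut structure) so that overlaps are eliminated.

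For the second half, the cleanest route is to use the graph $W_5$ from Example \ref{example:w5} as the base case ($n=4$), where we have already shown $\sn(W_5)=4$ but $\dsn(W_5)=3$, and then to build larger examples for each $n\geq 5$. I would construct a graph $G_n$ by an appropriate modification or amplification of $W_5$ — for instance, replacing each edge by several parallel copies, or attaching additional structure — so that the scramble number climbs to $n$ while the disjoint scramble number stays strictly below. The key mechanism to preserve is the phenomenon isolated in the $W_5$ analysis: that $\sn$ can exploit \emph{overlapping} eggs (as in the $2$-uniform scramble achieving order $4$ via $\lambda_2$ and $\alpha$), whereas a disjoint scramble with many eggs is forced by a vertex-count argument to contain small-degree singleton eggs that cap the egg-cut number. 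A concrete candidate is a family built so that $\lambda_2$ and the number of vertices grow with $n$ while the independence number and minimum degree constraints keep $\dsn$ strictly smaller; one may be able to use the uniform-scramble formula from \cite{uniform_scrambles} to compute $\sn(G_n)$ exactly and a counting argument analogous to the $W_5$ case to bound $\dsn(G_n)$.

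I expect the main obstacle to be the $\sn(G)=3$ case of the equality. Showing $\dsn(G)=\sn(G)$ here requires genuinely converting an arbitrary order-$3$ scramble into a disjoint one of the same order, and overlapping eggs could in principle be essential to achieving the hitting number. The safest strategy is to again invoke the structural classification: since a graph with $\sn(G)=3$ that is $3$-scramble minimal must be one of $K_4$, $P_{3;3}$, $C_{3;2,2,1}$, or $LL_6$ (by the proof of Theorem \ref{theorem:main_theorem}), and Example \ref{example:sn_3} already exhibits \emph{disjoint} scrambles of order $3$ on exactly these four graphs, one reduces to controlling how $\dsn$ behaves under the operations (edge/vertex deletion, smoothing) relating a general graph to its scramble-minimal core. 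The delicate point is that $\dsn$ need not be topological-minor monotone in the same clean way, so I would need either a direct disjoint-scramble construction on the general $\sn=3$ graph or a lemma transferring disjoint scrambles from the minimal core upward; establishing that transfer is likely where the real work lies. The second half should then follow more routinely once a single robust infinite family is identified and its two invariants are pinned down.
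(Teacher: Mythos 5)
Your outline for the first half matches the paper's actual proof in structure, and the step you flag as ``where the real work lies'' is exactly the step the paper carries out --- but it is much easier than you fear, and your proposal as written leaves it open. No ``disjointification'' of an arbitrary order-$3$ scramble is attempted, and no monotonicity of $\dsn$ under topological minors is needed (you are right to be suspicious of that). Instead: since $\sn(G)=3$, Theorem \ref{theorem:main_theorem} gives one of the four graphs $M$ as a topological minor, so $G$ contains a subdivision $H$ of $M$ as an honest \emph{subgraph}. Take the disjoint order-$3$ scramble on $M$ from Example \ref{example:sn_3} and enlarge each egg to absorb the subdivision vertices on its subdivided edges: the eggs remain connected and pairwise disjoint, so the hitting number is still $3$, and the edge-disjoint-path counts between eggs are unchanged, so the order on $H$ is still $3$. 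Viewing these same eggs as a scramble on $G$ leaves the hitting number unchanged and can only increase the egg-cut number (passing to a supergraph), so $\dsn(G)\geq 3=\sn(G)$. This is the ``transfer from the minimal core upward'' you asked for; only the subgraph direction is used, which is safe. Your $\sn=1$ and $\sn=2$ cases are fine (the paper uses a single-vertex egg for $\sn=1$ and two singleton eggs on a cycle for $\sn=2$, essentially your argument).

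The genuine gap is in the second half: you never produce a family for $n\geq 5$, and your first concrete candidate --- replacing each edge of $W_5$ by parallel copies --- cannot work. The hitting number of any scramble is at most $|V(G)|$ (the full vertex set is a hitting set), so $\sn(G)\leq |V(G)|=6$ for any edge-amplified $W_5$; multiplying edges alone can never push $\sn$ to arbitrary $n$, so the vertex set must grow. The paper's family is $G=K_{n+2}$ minus a Hamiltonian cycle: there $\alpha(G)=2$ and $\delta(G)=n-1\geq\lfloor |V(G)|/2\rfloor-1$ for $n\geq 5$, so \cite[Corollary 3.2]{Ech} gives $\sn(G)=|V(G)|-\alpha(G)=n$; and any disjoint scramble with hitting number at least $n$ on $n+2$ vertices must contain a singleton egg, whose $n-1$ incident edges form an egg-cut, forcing $\dsn(G)\leq n-1$. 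Note this is precisely the mechanism you isolated from the $W_5$ analysis (vertex count forces a small singleton egg that caps the egg-cut number), so your plan identifies the right phenomenon; but it is not a proof until a family is named on which both invariants are pinned down, and the computation of $\sn$ for $n\geq 5$ goes through the minimum-degree/independence-number result of \cite{Ech} rather than the uniform-scramble formula you suggest.
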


\begin{proof}
If $G$ is a graph with $\sn(G)=1$, then letting \(v\in V(G)\) the scramble $\s=\{\{v\}\}$ has $||\s||=1$ and so $\dsn(G)=\sn(G)$.  If $G$ is a graph with $\sn(G)=2$, $G$ is not a tree.  Thus, $G$ contains some cycle \(C\).  Let \(u\) and \(v\) be two distinct vertices on \(C\), and let $\s=\{\{u\},\{v\}\}$.  An egg-cut must cut $C$ in two places, and so $e(\s)\geq 2$.  Additionally, as the two eggs of $\s$ are disjoint, $h(\s)=2$ and thus $||\s||=2$.  It follows that $\dsn(G)\geq 2$ and so $\sn(G)=\dsn(G)$.  

If $\sn(G)=3$, then by Theorem \ref{theorem:main_theorem} we know that $G$ contains one of the graphs from Figure \ref{figure:four_forbidden} as a topological minor; call that graph \(M\).  Let \(H\) be a subgraph of \(G\) that is a subdivision of \(M\).  Build a scramble on \(H\) as follows:  start with the scramble \(\mathcal{S}_M\) on \(M\) illustrated in Figure \ref{figure:four_forbidden_scrambles}, and modify the eggs to include any vertices in edges that are subdivided in passing from \(M\) to \(H\), thereby obtaining a scramble \(\mathcal{S}_H\) on \(H\).  The eggs have remained disjoint, and the number of edge-disjoint paths connecting each pairs of eggs has remained unchanged, so \(||\mathcal{S}_H||=||\mathcal{S}_M||=3\).  Finally, we may consider our scramble as a scramble \(\mathcal{S}_G\) on \(G\), and we have \(\dsn(G)\geq ||\mathcal{S}_G||\geq ||\mathcal{S}_H||=3\) (in passing from \(H\) to \(G\), the hitting number remains unchanged, and the egg-cut number could only increase).  Since \(\sn(G)=3\), we have \(\dsn(G)=3\), as desired.

\begin{figure}[hbt]
    \centering
    \includegraphics[scale=1]{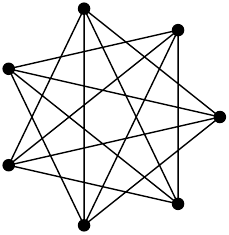}
    \caption{A graph with scramble number \(5\) and disjoint scramble number \(4\)}
    \label{figure:k7_minus_cycle}
\end{figure}

We already know that there exists a graph \(G\) with \(\dsn(G)<\sn(G)=4\), namely the wheel graph \(W_5\) from Example \ref{example:w5}.  Now let \(n\geq 5\), and let \(G\) be the graph obtained from a complete graph \(K_{n+2}\) by deleting a cycle of length \(n+2\); this graph is illustrated for \(n=5\) in Figure \ref{figure:k7_minus_cycle}.  Note that \(\alpha(G)=2\): every vertex is connected to all but two other vertices, which are connected to each other.   The minimum degree \(\delta(G)\) equals \(n-1\), which since \(n\geq 5\) is at least \(\lfloor (n+2)/2\rfloor-1= \lfloor |V(G)|/2\rfloor -1\). This allows us to apply \cite[Corollary 3.2]{Ech} to deduce that \(\sn(G)=|V(G)|-\alpha(G)=(n+2)-2=n\).  To see that \(\dsn(G)<n\), suppose that \(\mathcal{S}\) is a disjoint scramble on \(G\).  If \(h(\mathcal{S})\geq n\), then at least one of the \(n\) or more eggs has at most one vertex in it.  But deleting the \(n-1\) edges incident to that vertex would then give an egg-cut, meaning \(||\mathcal{S}||\leq n-1\).  Thus \(\dsn(G)\leq n-1<n=\sn(G)\).
\end{proof}

We close with a result concerning the computational complexity of scramble number.  It is known that computing the scramble number of a graph is NP-hard \cite{Ech}. We can ask if, for a fixed \(k\), there exists a polynomial-time algorithm to check whether \(\sn(G)\leq k\). For \(k=1\), the answer is yes, since there are efficient algorithms to check if a graph is a tree or not.  For \(k=2\), Theorem \ref{theorem:main_theorem} allows us to determine the answer is still yes.  We first recall the following useful result.  




\begin{theorem}[Theorem 1.1 in \cite{Gro}]\label{np} For every graph $H$, there is a $O(|V(G)|^3)$ time algorithm that decides if $H$ is a topological minor of G.

\end{theorem}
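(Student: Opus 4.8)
The plan is to prove this via the \emph{irrelevant vertex technique} combined with a treewidth dichotomy, following the general strategy that Robertson and Seymour developed for the disjoint paths problem. The starting observation is that $H\preceq G$ if and only if $G$ contains a subdivision of $H$: one must select a \emph{branch vertex} $b_x\in V(G)$ for each $x\in V(H)$, necessarily with $\deg_G(b_x)\geq\deg_H(x)$, together with internally vertex-disjoint paths in $G$ realizing the edges of $H$. Once the branch vertices and the assignment of paths to edges are fixed, what remains is exactly an instance of the \emph{$k$-disjoint paths problem} with $k=|E(H)|$ prescribed terminal pairs, which Robertson and Seymour solve in $O(|V(G)|^3)$ time for each fixed $k$. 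The whole difficulty is that naively enumerating all $\binom{|V(G)|}{|V(H)|}$ choices of branch vertices produces an exponent depending on $H$, so the heart of the argument is to avoid this enumeration and keep the exponent at $3$.

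I would organize the algorithm as a recursion driven by treewidth. When $\tw(G)$ is bounded by a function of $H$, the existence of a subdivision of $H$ is expressible in monadic second-order logic (quantifying over the branch vertices and the vertex sets of the connecting paths, and asserting disjointness together with the degree conditions), so Courcelle's theorem gives a linear-time decision procedure; a direct dynamic program over a tree decomposition works equally well, and this is the base case. When $\tw(G)$ exceeds a threshold depending on $H$, the excluded grid theorem produces a large wall in $G$, and the flat wall theorem gives a dichotomy: either the wall can be taken \emph{flat}, embedded in an essentially planar piece attached to the rest of $G$ through a bounded apex set, or else a bounded set of vertices dominates a highly connected substructure. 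In the flat case, a vertex $v$ deep in the interior of the wall is \emph{irrelevant} --- any hypothetical subdivision of $H$ through $v$ can be rerouted along the concentric cycles of the wall without disturbing the branch vertices or the disjointness of the paths --- so $H\preceq G$ if and only if $H\preceq G-v$, and we delete $v$ and recurse; in the highly connected case the dense substructure already contains $H$ as a topological minor, and we report that $H\preceq G$.

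The topological-minor setting forces one complication absent from the ordinary minor setting, namely the degree constraints $\deg_G(b_x)\geq\deg_H(x)$ at branch vertices, and I expect this interacting with the irrelevant-vertex argument to be the main obstacle. I would isolate the set $S$ of high-degree vertices of $G$: if $|S|$ is bounded by a function of $H$, every branch vertex of large $H$-degree must map into $S$, which costs only a constant number of guesses and reduces the rest to a bounded-degree routing instance handled by the recursion above; if $|S|$ is large, the high connectivity among the vertices of $S$ again forces $H$. The delicate point is the rerouting lemma in the flat-wall case: one must show that a bounded number of solution paths crossing a large flat grid can be simultaneously pushed off any single interior vertex \emph{while} preserving the degree requirements at the branch vertices, which requires the full strength of the flat wall theorem and a careful linkage argument. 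The second delicate point is controlling the recursion so that each irrelevant-vertex deletion, together with the $O(|V(G)|^3)$ disjoint-paths subroutine, yields a total running time of $O(|V(G)|^3)$ rather than a larger polynomial; this is precisely where the bookkeeping of the Robertson--Seymour framework, and the fact that irrelevant vertices can be located and removed efficiently, becomes indispensable.
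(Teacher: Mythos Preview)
The paper does not prove this theorem: it is quoted verbatim as a result from the literature (Theorem~1.1 of Grohe, Kawarabayashi, Marx, and Wollan) and is invoked only as a black box to deduce the subsequent corollary. There is therefore no ``paper's own proof'' to compare your attempt against.

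That said, your sketch is a reasonable high-level account of the ideas behind the original result, and the shape of the argument---treewidth dichotomy, large wall, irrelevant vertex---is in the right spirit. A few cautions if you wish to pursue this as an actual proof. First, the Robertson--Seymour disjoint paths machinery is for \emph{vertex}-disjoint paths; since the paper at hand allows multigraphs and states topological minor containment in terms of subdivisions, you would have to be precise about which version you invoke. Second, the genuine Grohe--Kawarabayashi--Marx--Wollan argument does not simply guess branch vertices and call disjoint paths: it introduces the notion of a \emph{folio} (the set of all ``rooted'' topological minors realizable in a piece) and computes folios recursively over a near-embedding structure; the irrelevant-vertex lemma they need is considerably sharper than the one for ordinary minors, precisely because of the degree constraints you flag. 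Third, your treatment of the ``highly connected'' branch is too quick: large treewidth alone does not force $H$ as a topological minor (that is the whole point---topological minor containment is not minor-closed in the relevant sense), and the actual argument requires a substantial structural theorem about graphs excluding a fixed topological minor. So while your outline captures the flavor, it glosses over exactly the places where the original paper does the real work; in the context of the present paper, simply citing the result is the correct move.
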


\begin{corollary}
There is a $O(|V(G)|^3)$ time algorithm that decides whether $\sn(G)\leq 2$.
\end{corollary}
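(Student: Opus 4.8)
The plan is to combine the structural characterization of Theorem \ref{theorem:main_theorem} with the algorithmic result of Theorem \ref{np}. By Theorem \ref{theorem:main_theorem}, deciding whether \(\sn(G)\leq 2\) is equivalent to deciding whether \(G\) avoids all four of \(K_4\), \(P_{3;3}\), \(C_{3;2,2,1}\), and \(LL_6\) as topological minors. Since this is a \emph{finite} list of fixed pattern graphs, independent of the input \(G\), I would simply test each one in turn.

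Concretely, first I would, for each \(H\) in the list \(\{K_4, P_{3;3}, C_{3;2,2,1}, LL_6\}\), invoke the algorithm of Theorem \ref{np} to decide whether \(H\preceq G\). Each such invocation runs in \(O(|V(G)|^3)\) time (the hidden constant may depend on \(H\), but since each \(H\) is fixed this is an absolute constant). Next, I would report that \(\sn(G)\leq 2\) precisely when all four tests conclude that the corresponding pattern is \emph{not} a topological minor of \(G\), and otherwise report \(\sn(G)\geq 3\); correctness is immediate from Theorem \ref{theorem:main_theorem}. Because we perform a constant number (four) of \(O(|V(G)|^3)\)-time checks, the total running time is \(4\cdot O(|V(G)|^3)=O(|V(G)|^3)\), as claimed.

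The one point requiring care --- and the closest thing to an obstacle --- is that three of the four forbidden patterns (\(P_{3;3}\), \(C_{3;2,2,1}\), and \(LL_6\)) are genuine multigraphs with parallel edges, whereas topological-minor-testing results are sometimes phrased only for simple patterns. I would handle this by noting that Theorem \ref{np} is stated for any pattern graph \(H\) in the sense used throughout this paper, where a ``graph'' is permitted to have parallel edges; it therefore applies directly, a subdivision model of such an \(H\) in \(G\) simply requiring internally disjoint paths to realize each parallel copy of an edge. Should one prefer to rely only on a simple-graph version of the testing algorithm, the edge multiplicities could be encoded by a bounded gadget on each multi-edge without affecting the \(O(|V(G)|^3)\) bound, since all four patterns have fixed size. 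With this understood, the result follows as a direct corollary of the two cited theorems.
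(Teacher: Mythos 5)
Your proposal is correct and follows essentially the same route as the paper: invoke the algorithm of Theorem \ref{np} once for each of the four fixed forbidden patterns and conclude via Theorem \ref{theorem:main_theorem}, for a total of $4\cdot O(|V(G)|^3)=O(|V(G)|^3)$ time. Your closing remark about the parallel-edge patterns is a careful addition the paper's proof passes over silently, but it does not change the argument.
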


\begin{proof}
Let $G$ be a graph.  From Theorem \ref{np} we know there is a $O(|V(G)|^3)$ time algorithm to check if any fixed graph $H$ is a topological minor of $G$.  To check if $G$ has $\sn(G)\leq 2$ we must simply run this algorithm four times to check for graphs \(K_4\), \(P_{3;3}\), \(C_{3;2,2,1}\), and \(LL_6\). If $G$ contains one of these graphs as a topological minor, then $\sn(G)>2$.  If $G$ does not, then $\sn(G) \leq 2$.  
\end{proof}

For fixed \(k\geq 3\), the question is to our knowledge open.

\begin{question} For what fixed values of \(k\geq 3\) does there exist a polynomial time algorithm to determine whether \(\sn(G)\leq k\)?
\end{question}

We remark that for \(k=3\), the answer will be the same if we replace \(\sn(G)\) with \(\dsn(G)\) (which may be simpler to work with).

\bibliographystyle{plain}

\end{document}